\newcommand{\ud}[0]{\,\mathrm{d}}
\newcommand{\dist}[0]{\operatorname{dist}}
\newcommand{\abs}[1]{|#1|}
\newcommand{\Babs}[1]{\Big|#1\Big|}
\newcommand{\Norm}[2]{\|#1\|_{#2}}
\newcommand{\BNorm}[2]{\Big\|#1\Big\|_{#2}}
\newcommand{\pair}[2]{\langle #1,#2 \rangle}
\newcommand{\Bpair}[2]{\Big\langle #1,#2 \Big\rangle}
\newcommand{\lspan}[0]{\operatorname{span}}
\newcommand{\bddlin}[0]{\mathscr{L}}
\newcommand{\supp}[0]{\operatorname{supp}}
\newcommand{\N}{\mathbb{N}}
\newcommand{\R}{\mathbb{R}}
\newcommand{\Z}{\mathbb{Z}}
\newcommand{\prob}[0]{\mathbb{P}}
\newcommand{\Exp}[0]{\mathbb{E}}
\newcommand{\good}[0]{\operatorname{good}}
\newcommand{\bad}[0]{\operatorname{bad}}
\swapnumbers \numberwithin{equation}{section}
\theoremstyle{plain}
\newtheorem{theorem}[equation]{Theorem}
\newtheorem{proposition}[equation]{Proposition}
\newtheorem{corollary}[equation]{Corollary}
\newtheorem{lemma}[equation]{Lemma}
\newtheorem{remark}[equation]{Remark}
\newtheorem{definition}[equation]{Definition}
\begin{document}

\title[Dyadic representation theorem using wavelets with compact support]{The dyadic representation theorem using smooth wavelets with compact support}
\author[T. \ Hyt\"onen and S. Lappas]{Tuomas \ Hyt\"onen and Stefanos Lappas}
\address{Department of Mathematics and Statistics, P.O.B.~68 (Pietari Kalmin katu~5), FI-00014 University of Helsinki, Finland}
\email{tuomas.hytonen@helsinki.fi, https://orcid.org/0000-0003-2911-2391}
\email{stefanos.lappas@helsinki.fi, https://orcid.org/0000-0001-5943-5672}

\subjclass[2020]{Primary 42B20. Secondary 42C40}
\keywords{Singular integral, Calder\'on--Zygmund operator, Wavelets}

\begin{abstract}
The representation of a general Calder\'on--Zygmund operator in terms of dyadic Haar shift operators first appeared as a tool to prove the $A_2$ theorem, and it has found a number of other applications. In this paper we prove a new dyadic representation theorem by using smooth compactly supported wavelets in place of Haar functions. A key advantage of this is that we achieve a faster decay of the expansion when the kernel of the general Calder\'on--Zygmund operator has additional smoothness.

\end{abstract}

\maketitle

\section{Introduction}
It was long conjectured that classical inequalities for singular integrals $T$ on weighted spaces $L^2(w)$ with a Muckenhoupt $A_2$ weight $w$ should take the sharp form
\begin{equation*}
  \Norm{Tf}{L^2(w)}\leq c_T[w]_{A_2}\Norm{f}{L^2(w)}.
\end{equation*}
This {\em $A_2$ conjecture} was first verified by one of us \cite{Hytonen:A2} by introducing a {\em dyadic representation} of $T$, an expansion in terms of simpler discrete model operators (called dyadic/Haar shifts). Earlier versions of the $A_2$ conjecture for special operators such as the martingale transform, the Beurling--Ahlfors transform, the Hilbert transform and the Riesz transform were due to J. Wittwer \cite{JW}, S. Petermichl and A.Volberg \cite{PV}, S. Petermichl \cite{Petermichl:Hilbert, Petermichl:Riesz}, respectively. Since then, simpler proofs of the $A_2$ theorem as in Lerner \cite{Lerner:simple}, Lacey \cite{L}, and Lerner--Ombrosi \cite{LO} replaced the dyadic representation by sparse domination, but the original dyadic representation theorem continues to have an independent interest and other applications.

One such application is the extension of the linear dyadic representation to bi-parameter (also known as product-space, or Journ\'e--type, after \cite{J}) singular integrals in \cite{Martikainen:Advances}, which has defined the new standard framework for the study of these operators. The multi-parameter extension of this is due to Y. Ou \cite{OU} and a bi-linear version is due to Li--Martikainen--Ou--Vuorinen \cite{LMOV}. In the bi-parameter context the representation theorem has proven to be extremely useful e.g., in connection with bi-parameter commutators and weighted analysis, see Holmes--Petermichl--Wick \cite{HPW}, Ou--Petermichl--Strouse \cite{OPS} and Li--Martikainen--Vuorinen \cite{LMV1,LMV2}. On the other hand, there are some fundamental obstacles to sparse domination of bi-parameter objects, see \cite{BCOR}, which makes the dyadic representation particularly useful in this setting.

In another direction, an open problem in vector-valued Harmonic Analysis is to describe the linear dependence of the norm of a vector-valued Calder\'on--Zygmund operator on the UMD constant of the underlying Banach space. In abstract UMD spaces, the linear bound has only been shown for the Beurling--Ahlfors transform and for some other special operators with even kernel such as certain {\em Fourier multiplier operators} (see \cite{MSSG}). It is also interesting to mention that, as was the case with the $A_2$ theorem, the linear bound for the Beurling--Ahlfors transform has been known for some time, yet the possible linear dependence between the vector-valued Hilbert transform and the UMD constant is still a famous open problem (see \cite[Problem~O.6]{HNVW}). More recently, Pott and Stoica established in \cite{PS} the linear dependence of sufficiently smooth Banach space-valued even singular integrals on the UMD constant by showing such a linear estimate for symmetric dyadic shifts. Their estimate for dyadic shifts grows like $2^{\max(i,j)/2}$ in terms of the parameters $(i,j)$ of the shifts. As explained in their work, to have convergence, one needs a decay factor $2^{-s\max(i,j)}$, which is guaranteed by kernel smoothness $s>\frac{1}{2}$ and only in dimension $d=1$. It is interesting to notice that in most other applications of the dyadic representation theorem, notably to the weighted inequalities, the rate of convergence of the representation is irrelevant as long as it is exponential. Formally, the same argument should work in any dimension $d$ assuming smoothness of order $s>\frac{1}{2}d$, but the existing Haar dyadic representation can only ''see'' smoothness up to order $s\leq1$; thus $\frac{1}{2}d<s\leq1$ forces $d=1$. 

This motivated us to find a new version of the dyadic representation theorem with faster decay using smooth wavelets with compact support. 
Our main result is the following (see Section \ref{basics} for a precise definition of the wavelet shifts $S^{ij}_{\omega}$ and the required regularity of the wavelets):
\begin{theorem}\label{main theorem}
Let $s\in\Z_+$, and $T$ be a bounded Calder\'on--Zygmund operator in $L^2(\R^d)$ with a kernel satisfying $\abs{\partial^{\alpha}K(x,y)}\leq\Norm{K}{CZ_s}\abs{x-y}^{-d-\abs{\alpha}}$ for every $\abs{\alpha}\leq s$. In addition, suppose that $T,T^*:\mathcal{P}_s\rightarrow\mathcal{P}_s$, where $\mathcal{P}_s$ is the space of polynomials of degree less than $s$. Then for any given $\epsilon>0$, $T$ has an expansion, say for $f,g\in C^1_c(\R^d)$,
\begin{equation*}
  \pair{g}{Tf}=c\cdot\big(\Norm{K}{CZ_s}+\Norm{T}{L^2\to L^2}\big)\cdot\Exp_{\omega} \sum_{i,j=1}^{\infty}2^{-(s-\epsilon)\max(i,j)}\pair{g}{S^{ij}_{\omega}f},
\end{equation*}
where $c$ depends only on d, s and $\epsilon$, $\Exp_{\omega}$ is the expectation with respect to the random parameter $\omega$, and $S^{ij}_{\omega}$ is a version of a dyadic shift with parameters $(i,j)$ but using sufficiently regular wavelets in place of the Haar functions. 
\end{theorem}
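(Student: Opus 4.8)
The plan is to follow the template of Hyt\"onen's dyadic representation theorem, replacing the Haar basis by a sufficiently regular wavelet basis $\{\psi_I\}$ adapted to a random dyadic grid $\mathcal{D}_\omega$ (see Section \ref{basics}); here ``sufficiently regular'' will mean that $\psi$ lies in $C^s$, is supported in a fixed dilate of the unit cube, and satisfies $\psi\perp\mathcal{P}_s$, i.e.\ has vanishing moments of every order $<s$ (finitely many wavelet types per cube, which I suppress below). First I would fix such a system, note that for a.e.\ $\omega$ it is an orthonormal basis of $L^2(\R^d)$ in which \emph{any two of its cubes have a common ancestor} (a single grid is blind to pairs straddling a fixed dyadic wall, which is one reason the randomness enters, besides the usual Nazarov--Treil--Volberg good/bad-cube reduction), and for $f,g\in C^1_c(\R^d)$ expand
\[
  \pair{g}{Tf}=\sum_{I,J\in\mathcal{D}_\omega}\pair{g}{\psi_J}\pair{\psi_J}{T\psi_I}\pair{\psi_I}{f},
\]
with absolute convergence once the matrix estimate below is in force, so the identity survives $\Exp_\omega$.

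The heart of the proof is the new bound on $\pair{\psi_J}{T\psi_I}$. Fix $I,J$ with $\ell(I)\le\ell(J)$, write $K:=I\vee J$ for the minimal common ancestor and let $i,j\ge0$ satisfy $\ell(I)=2^{-i}\ell(K)$, $\ell(J)=2^{-j}\ell(K)$; the target is
\[
  \Babs{\pair{\psi_J}{T\psi_I}}\le c(d,s,\epsilon)\big(\Norm{K}{CZ_s}+\Norm{T}{L^2\to L^2}\big)\,2^{-(s-\epsilon)\max(i,j)}\,\frac{\abs{I}^{1/2}\abs{J}^{1/2}}{\abs{K}}.
\]
When $\dist(I,J)\gtrsim\ell(J)$ the enlarged supports are disjoint, so I would use the kernel: Taylor-expanding $x\mapsto K(y,x)$ at the centre $c_I$ of $I$ to degree $s-1$, the vanishing moments $\psi_I\perp\mathcal{P}_s$ annihilate the polynomial part and $\abs{\partial_x^\alpha K}\le\Norm{K}{CZ_s}\abs{x-y}^{-d-s}$ (for $\abs\alpha=s$) gives $\abs{T\psi_I(y)}\lesssim\Norm{K}{CZ_s}\ell(I)^{s+d/2}\dist(y,I)^{-d-s}$; integrating against $\psi_J$ and using $\dist(y,I)\gtrsim\ell(K)$ on $\supp\psi_J$ produces the bound with a factor $2^{-i\epsilon}$ to spare ($\epsilon$ is not needed here). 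When $\dist(I,J)\lesssim\ell(J)$ --- which covers the nested case $I\subset J$ and all near-boundary pairs, and where supports may overlap, and in which $\ell(K)\simeq\ell(J)$ so $j=O(1)$ --- I would subtract from $\psi_J$ its degree-$(s-1)$ Taylor polynomial $P$ at $c_I$; since $T^*P\in\mathcal{P}_s$ by hypothesis and $\psi_I\perp\mathcal{P}_s$, one has $\pair{P}{T\psi_I}=0$, so $\pair{\psi_J}{T\psi_I}=\pair{\psi_J-P}{T\psi_I}$. Splitting $\R^d$ by whether $\abs{y-c_I}$ is $\lesssim\ell(I)$, between $\ell(I)$ and $\ell(J)$, or $\gtrsim\ell(J)$, one uses $\abs{\psi_J-P}(y)\lesssim\ell(J)^{-d/2-s}\abs{y-c_I}^s$ on the first two zones (Taylor remainder, equivalently the order-$s$ vanishing of $\psi_J$ at $\partial\supp\psi_J$) and $\abs{\psi_J-P}(y)=\abs{P(y)}\lesssim\ell(J)^{-d/2}(\abs{y-c_I}/\ell(J))^{s-1}$ on the third, while controlling $T\psi_I$ only in $L^1$ via $\Norm{T}{L^2\to L^2}$ on $\{\abs{y-c_I}\lesssim\ell(I)\}$ and by the kernel bound elsewhere. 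The first and third zones contribute $\lesssim 2^{-(s+d/2)\max(i,j)}$; the middle zone yields $\int_{\ell(I)}^{\ell(J)}r^{-1}\ud r=\max(i,j)\log2$, a harmless logarithmic factor --- and this is \emph{exactly} the source of the loss from $2^{-s\max(i,j)}$ to $2^{-(s-\epsilon)\max(i,j)}$.

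Finally, for each $(i,j)$ and each $\omega$ I would collect the terms of fixed relative scales into
\[
  S^{ij}_\omega f:=\frac{2^{(s-\epsilon)\max(i,j)}}{c\,(\Norm{K}{CZ_s}+\Norm{T}{L^2\to L^2})}\sum_{K\in\mathcal{D}_\omega}\ \sum_{\substack{I,J\subseteq K,\ I\vee J=K\\ \ell(I)=2^{-i}\ell(K),\ \ell(J)=2^{-j}\ell(K)}}\pair{\psi_J}{T\psi_I}\,\pair{\psi_I}{f}\,\psi_J ,
\]
which by the matrix estimate is a wavelet shift with coefficients bounded by $\abs{I}^{1/2}\abs{J}^{1/2}/\abs{K}$; the usual Cauchy--Schwarz/almost-orthogonality argument (each cube of the relevant scale has a unique ancestor of scale $2^{j}$, resp.\ $2^{i}$, times larger) then gives $\Norm{S^{ij}_\omega}{L^2\to L^2}\lesssim1$, and summing the identity over $i,j$ and applying $\Exp_\omega$ delivers the representation, the double series converging since $\sum_{i,j}2^{-(s-\epsilon)\max(i,j)}<\infty$. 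I expect the main obstacle to be the matrix estimate in the overlapping-support regime: making the polynomial subtraction rigorous --- this is where the precise meaning of the hypothesis $T,T^*\colon\mathcal{P}_s\to\mathcal{P}_s$ does the work of the classical $BMO$ conditions and removes the paraproduct terms altogether --- and tracking the $i,j$-dependence finely enough that only the logarithmic, hence $\epsilon$-absorbable, loss survives; a secondary point is to exhibit wavelets that are simultaneously $C^s$, compactly supported, and $\perp\mathcal{P}_s$, which is arranged in Section \ref{basics}.
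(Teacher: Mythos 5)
Your strategy is essentially the paper's: random grids with the Nazarov--Treil--Volberg good/bad reduction, the wavelet expansion of $\pair{g}{Tf}$, an order-$s$ Taylor expansion of the kernel at $c_I$ played against the vanishing moments of $\psi_I$ for separated cubes, subtraction of the degree-$(s-1)$ Taylor polynomial of $\psi_J$ at $c_I$ (annihilated via $T^*:\mathcal{P}_s\to\mathcal{P}_s$ and $\psi_I\perp\mathcal{P}_s$) for close or nested cubes, with the $L^2$ bound near $I$ and the kernel away from $I$, and reassembly into good wavelet shifts. Your two-fold splitting versus the paper's five-fold one ($\sigma_{\operatorname{far}},\sigma_{\operatorname{between}},\sigma_{\operatorname{in}},\sigma_{=},\sigma_{\operatorname{near}}$) is largely cosmetic.

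There is, however, a genuine gap in your bookkeeping of $\ell(K)$ for $K=I\vee J$, and it sits exactly where the paper's Lemma \ref{lem:IveeJ} does its work. Goodness of $I$ yields only $\ell(K)\,\Theta(\ell(I)/\ell(K))\lesssim D(I,J)$ with $\Theta(t)=t^{\theta}$, $\theta>0$; it does \emph{not} yield $\ell(K)\lesssim D(I,J)$. A good cube $I$ may sit at distance barely exceeding $\ell(K')^{1-\theta}\ell(I)^{\theta}$ from a high-level dyadic wall, with $J$ just on the other side, so that $\ell(K)$ exceeds $D(I,J)$ by a factor of order $2^{i\theta}$. Consequently, in your separated case the claim that $\dist(y,I)\gtrsim\ell(K)$ on $\supp\psi_J$ is false; the kernel argument gives only $\Norm{K}{CZ_s}\frac{\sqrt{\abs{I}\abs{J}}}{\abs{K}}2^{-is}\Theta(2^{-i})^{-d-s}$, and one must choose $\theta=\epsilon/(d+s)$ to absorb the last factor --- so the far regime \emph{does} consume part of the $\epsilon$, contrary to your ``$\epsilon$ is not needed here''. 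Likewise, in the overlapping case the claim $\ell(K)\simeq\ell(J)$, hence $j=O(1)$, fails for the same reason (it is true only in the paper's $\sigma_{=}$ and $\sigma_{\operatorname{near}}$ pieces, where goodness forces $\ell(J)<2^{r}\ell(I)$; for $I\subsetneq J$ one only gets $j\lesssim\theta i+O(1)$). Converting your bound $(\abs{I}/\abs{J})^{1/2}(\ell(I)/\ell(J))^{s-\epsilon}$ into the shift normalization $\frac{\sqrt{\abs{I}\abs{J}}}{\abs{K}}\,2^{-i(s-2\epsilon)}$ therefore requires paying the factor $\abs{K}/\abs{J}$, again controlled only through $\ell(K)^{1-\theta}\ell(I)^{\theta}\lesssim\ell(J)$ at the cost of another $\epsilon$ (the computation in the $\sigma_{\operatorname{between}}$ subsection). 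Both slips are repairable by precisely these standard goodness estimates, but as written your derivation of the matrix estimate does not close, and attributing the entire $\epsilon$-loss to the logarithm from the middle zone is inaccurate.
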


Having this result at our disposal, we can hope to extend the result of \cite{PS} to dimensions $d>1$. We plan to address this question in future work. Another possible area of applications is numerical algorithms for singular integrals, as in \cite{BCR}, where an ancestor of the dyadic representation is used for this purpose. It is clear that, in such applications, a high rate of convergence would be preferred.

The interpretation of the assumption that $T$ and $T^*$ map the space of polynomials $\mathcal{P}_s$ into itself is made rigorous in Section \ref{sec:representation}, where we restate Theorem \ref{main theorem} as Theorem \ref{thm:formula}. These are ``special cancellation'' or ``vanishing paraproduct'' assumptions that one might like to remove in future work.

Since the circulation of this work, Di Plinio et al.~\cite{DPWW} have presented an alternative ``representation theorem using smooth wavelets'', where they also deal with the paraproduct terms arising from more general cancellation assumptions; see also \cite{DPGW} for an extension of their representation to bi-linear operators. Their version is a closer relative of the continuous wavelet transform, in contrast to the semi-discrete representation in our Theorem \ref{main theorem}.

The paper is organized as follows: in Section \ref{basics} we recall the necessary definitions and results that we are using. Section \ref{sec:representation} is dedicated to a detailed statement of our main result (see Theorem \ref{thm:formula}). As in the case of the dyadic representations using Haar functions, our proof of Theorem \ref{main theorem}/\ref{thm:formula} relies on an expansion (see Proposition \ref{prop:expansion of T}) of the Calder\'on--Zygmund operator in terms of the (previously Haar, now smooth) wavelet basis, but the subsequent analysis of the expansion presents some significant departures from the Haar case. We split the series that appears in this expansion  into five parts which are treated in Section \ref{estimating the different sums}.

\subsection*{Notation} Throughout the paper, we denote by $c, C$ constants that depend at most on some fixed parameters that should be clear from the context. The notation $A\lesssim B$ means that $A\leq CB$ holds for such a constant $C$. Moreover, when $Q$ is a cube and $t>0$, then $tQ$ represents the cube with the same centre and $t$ times the sidelength of $Q$. Also, we make the convection that $\abs{\ }$ stands for the $\ell^{\infty}$ norm on $\R^d$, i.e., $\abs{x}:=\max_{1\leq i\leq d}\abs{x_i}$. While the choice of the norm is not particularly important, this choice is slightly more convenient than the usual Euclidean norm when dealing with cubes as we will: e.g., the diameter of a cube in the $\ell^{\infty}$ norm is equal to its sidelength $\ell(Q)$.

\subsection*{Acknowledgements} Both authors were supported by the Academy of Finland through project No. 314829 and through the Finnish Centre of Excellence in Randomness and Structure ``FiRST'' (project No. 346314). The second author is very grateful to his doctoral supervisor Prof. Tuomas Hyt\"onen for many discussions, giving him a lot of motivation on the subject and plenty of remarks helpful to improve the content and the exposition of this paper. Also, the second author would like to thank the Foundation for Education and  European Culture (Founders Nicos and Lydia Tricha) for their financial support during the academic years 2017--2018, 2018--2019 and 2019--2020.

The authors are grateful to the anonymous referees for their constructive comments that improved our presentation.

\section{Preliminaries}\label{basics}

We recall the following from \cite[Section~2]{Hyt:dyadic op.}. 
\newline The standard (or reference) system of dyadic cubes is
\begin{equation*}
  \mathscr{D}^0:=\{2^{-k}([0,1)^d+l):k\in\Z,l\in\Z^d\}.
\end{equation*}
We will need several dyadic systems, obtained by translating the reference system as follows. Let $\omega=(\omega_j)_{j\in\Z}\in(\{0,1\}^d)^{\Z}$ and
\begin{equation*}
  I\dot+\omega:=I+\sum_{j:2^{-j}<\ell(I)}2^{-j}\omega_j.
\end{equation*}
Then
\begin{equation*}
  \mathscr{D}^{\omega}:=\{I\dot+\omega:I\in\mathscr{D}^0\},
\end{equation*}
and it is straightforward to check that $\mathscr{D}^{\omega}$ inherits the important nestedness property of $\mathscr{D}^0$: if $I,J\in\mathscr{D}^{\omega}$, then $I\cap J\in\{I,J,\varnothing\}$. When the particular $\omega$ is unimportant, the notation $\mathscr{D}$ is sometimes used for a generic dyadic system.

\subsection{Random dyadic systems; good and bad cubes}\label{rand. dyad. systems;good and bad cubes}

We obtain a notion of {\em random dyadic systems} by equipping the parameter set $\Omega:=(\{0,1\}^d)^{\Z}$ with the natural probability measure: each component $\omega_j$ has an equal probability $2^{-d}$ of taking any of the $2^d$ values in $\{0,1\}^d$, and all components are independent of each other. We denote by $\Exp_{\omega}$ the expectation over the random variables $\omega_j,j\in\Z$.

Consider the modulus of continuity $\Theta(t)=t^\theta, \theta\in(0,1)$, for which we will formulate the notion of good and bad cubes.
We also fix a (large) parameter $r\in\Z_+$.
\begin{definition}
A cube $I\in\mathscr{D}^{\omega}$ is called bad if there exists $J\in\mathscr{D}^{\omega}$ such that $\ell(J)\geq 2^r\ell(I)$ and
\begin{equation}\label{eq:bad and good cubes}
  \dist(I,\partial J)\leq\Big(\frac{\ell(I)}{\ell(J)}\Big)^\theta\ell(J):
\end{equation}
roughly, $I$ is relatively close to the boundary of a much bigger cube. A cube is called good if it is not bad.
\end{definition}

We repeat from \cite[Section~2.3]{Hyt:dyadic op.} some basic probabilistic observations related to badness. Let $I\in\mathscr{D}^0$ be a reference interval. The {\em position} of the translated interval
\begin{equation*}
  I\dot+\omega=I+\sum_{j:2^{-j}<\ell(I)}2^{-j}\omega_j,
\end{equation*}
by definition, depends only on $\omega_j$ for $2^{-j}<\ell(I)$. On the other hand, the {\em badness} of $I\dot+\omega$ depends on its {\em relative position} with respect to the bigger intervals
\begin{equation*}
  J\dot+\omega=J+\sum_{j:2^{-j}<\ell(I)}2^{-j}\omega_j+\sum_{j:\ell(I)\leq 2^{-j}<\ell(J)}2^{-j}\omega_j.
\end{equation*}
The same translation component $\sum_{j:2^{-j}<\ell(I)}2^{-j}\omega_j$ appears in both $I\dot+\omega$ and $J\dot+\omega$, and so does not affect the relative position of these intervals. Thus this relative position, and hence the badness of $I$, depends only on $\omega_j$ for $2^{-j}\geq\ell(I)$. In particular:

\begin{lemma}\label{lem:indep}
For $I\in\mathscr{D}^0$, the position and badness of $I\dot+\omega$ are independent random variables.
\end{lemma}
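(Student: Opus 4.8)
The plan is to make precise the coordinate-splitting argument sketched just above the statement. Fix $I\in\mathscr{D}^0$ and partition $\Z=A\sqcup B$ with $A:=\{j\in\Z:2^{-j}<\ell(I)\}$ and $B:=\{j\in\Z:2^{-j}\geq\ell(I)\}$. First I would observe that the position of $I\dot+\omega$ is, by definition, determined by the translation vector $v(\omega):=\sum_{j:2^{-j}<\ell(I)}2^{-j}\omega_j$ applied to the fixed reference cube $I$, and $v(\omega)$ is a function of $(\omega_j)_{j\in A}$ only; hence the position is $\sigma\big((\omega_j)_{j\in A}\big)$-measurable.

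The substance of the argument is to check that the badness of $I\dot+\omega$ is $\sigma\big((\omega_j)_{j\in B}\big)$-measurable. Every competitor $J$ in the definition of badness has the form $J=J_0\dot+\omega$ with $J_0\in\mathscr{D}^0$ and $\ell(J_0)\geq 2^r\ell(I)$, and since $\ell(J_0)>\ell(I)$ one may split
\[
  J_0\dot+\omega=J_0+\sum_{j:2^{-j}<\ell(J_0)}2^{-j}\omega_j=J_0+u_{J_0}(\omega)+v(\omega),\qquad u_{J_0}(\omega):=\sum_{j:\ell(I)\leq 2^{-j}<\ell(J_0)}2^{-j}\omega_j,
\]
where $u_{J_0}(\omega)$ depends only on $(\omega_j)_{j\in B}$. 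Since $I\dot+\omega=I+v(\omega)$, translation invariance of the distance function gives $\dist(I\dot+\omega,\partial(J_0\dot+\omega))=\dist\big(I,\partial(J_0+u_{J_0}(\omega))\big)$, while $\ell(I\dot+\omega)=\ell(I)$ and $\ell(J_0\dot+\omega)=\ell(J_0)$ do not depend on $\omega$. Therefore $I\dot+\omega$ is bad if and only if there exists $J_0\in\mathscr{D}^0$ with $\ell(J_0)\geq 2^r\ell(I)$ and $\dist\big(I,\partial(J_0+u_{J_0}(\omega))\big)\leq\Theta\big(\ell(I)/\ell(J_0)\big)\,\ell(J_0)$; the set of relevant $J_0$ and all the right-hand sides are non-random, and each $u_{J_0}(\omega)$ is a function of $(\omega_j)_{j\in B}$, so the event $\{I\dot+\omega\text{ bad}\}$ lies in $\sigma\big((\omega_j)_{j\in B}\big)$.

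Finally I would conclude by invoking independence: since the coordinates $(\omega_j)_{j\in\Z}$ are mutually independent and $A\cap B=\varnothing$, the sub-$\sigma$-algebras $\sigma\big((\omega_j)_{j\in A}\big)$ and $\sigma\big((\omega_j)_{j\in B}\big)$ are independent; as the position is measurable with respect to the former and the badness with respect to the latter, the two random variables are independent. I do not expect a genuine obstacle here; the only point requiring a little care is the bookkeeping verifying that the common translation component $v(\omega)$ truly cancels out of the relative position of $I\dot+\omega$ and each larger $J_0\dot+\omega$, which is precisely what the discussion preceding the lemma establishes.
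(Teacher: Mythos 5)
Your proposal is correct and follows essentially the same route as the paper, which proves the lemma via the discussion immediately preceding it: the position of $I\dot+\omega$ depends only on $\omega_j$ with $2^{-j}<\ell(I)$, while the badness depends only on the relative position with respect to larger cubes and hence only on $\omega_j$ with $2^{-j}\geq\ell(I)$, so independence follows from the independence of disjoint blocks of coordinates. Your write-up merely makes the measurability bookkeeping explicit, which is a faithful formalization rather than a different argument.
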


Another observation is the following: by symmetry and the fact that the condition of badness only involves relative position and size of different cubes, it readily follows that the probability of a particular cube $I\dot+\omega$ being bad is equal for all cubes $I\in\mathscr{D}^0$:
\begin{equation*}
  \prob_{\omega}(I\dot+\omega\bad)=\pi_{\bad}=\pi_{\bad}(r,d,\theta).
\end{equation*}

The final observation concerns the value of this probability:

\begin{lemma}\label{lem:value of pi_bad}
We have
\begin{equation*}
  \pi_{\bad}\leq 8d\int_0^{2^{-r}}t^\theta\frac{\ud t}{t}=\frac{8d}{\theta}2^{-r\theta};
\end{equation*}
in particular, $\pi_{\bad}<1$ if $r=r(d,\theta)$ is chosen large enough.
\end{lemma}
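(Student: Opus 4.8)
The plan is to bound $\pi_{\bad}$ by summing, over all scales $2^r \le 2^k := \ell(J)/\ell(I)$, the probability that $I\dot+\omega$ is close to the boundary of \emph{some} fixed-scale ancestor-sized cube $J$. First I would use Lemma \ref{lem:indep}: since the position of $I\dot+\omega$ is independent of its badness, I may condition on a fixed choice of the "small" components $\omega_j$ with $2^{-j}<\ell(I)$, and then estimate the probability over the remaining components. For a single larger cube $J \in \mathscr{D}^\omega$ with $\ell(J) = 2^k\ell(I)$ containing a neighbourhood of $I$, the event in \eqref{eq:bad and good cubes} says $\dist(I,\partial J)\le \Theta(2^{-k})\ell(J)$. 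The boundary $\partial J$ is a union of $2d$ faces; the set of translation amounts pushing $I$ to within distance $\Theta(2^{-k})\ell(J)=2^k\Theta(2^{-k})\ell(I)$ of a given face is an interval (in the relevant coordinate) of length comparable to $2\cdot 2^k\Theta(2^{-k})\ell(I)$, out of a total period $\ell(J)=2^k\ell(I)$ over which the relevant random translation component ranges. So for fixed $k$ the conditional probability of being "badly placed with respect to scale $k$" is at most $2d \cdot \dfrac{2\cdot 2^k\Theta(2^{-k})\ell(I)}{2^k\ell(I)} = 4d\,\Theta(2^{-k})$.

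Next I would sum over scales. A bad cube is one that is badly placed with respect to \emph{some} $k \ge r$, so by a union bound
\begin{equation*}
  \pi_{\bad} \le \sum_{k=r}^{\infty} 4d\,\Theta(2^{-k}) = 4d\sum_{k=r}^{\infty} 2^{-k\theta}.
\end{equation*}
To match the claimed integral form I would compare this sum to the integral $\int_0^{2^{-r}}\Theta(t)\,\frac{\ud t}{t}$: since $\Theta(t)=t^\theta$ is increasing, $2^{-k\theta} = \Theta(2^{-k}) \le \frac{1}{\log 2}\int_{2^{-k-1}}^{2^{-k}}\Theta(t)\,\frac{\ud t}{t}\cdot(\text{const})$, more precisely $\sum_{k=r}^\infty \Theta(2^{-k}) \le \frac{1}{\ln 2}\int_0^{2^{-r+1}}\Theta(t)\frac{\ud t}{t}$ or a similar elementary comparison, and then absorb the numerical constants to reach the stated bound $8d\int_0^{2^{-r}}\Theta(t)\frac{\ud t}{t}$. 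Evaluating $\int_0^{2^{-r}} t^{\theta}\,\frac{\ud t}{t} = \int_0^{2^{-r}} t^{\theta-1}\ud t = \frac{1}{\theta}2^{-r\theta}$ gives the closed form $\frac{8d}{\theta}2^{-r\theta}$. Finally, since $2^{-r\theta}\to 0$ as $r\to\infty$ with $d,\theta$ fixed, choosing $r=r(d,\Theta)$ large enough makes $\pi_{\bad}<1$.

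The main obstacle, and the only place requiring care, is the geometric counting step: correctly identifying that the badness of $I\dot+\omega$ depends only on the components $\omega_j$ with $2^{-j}\ge \ell(I)$ (this is exactly the discussion preceding Lemma \ref{lem:indep}), and then showing that, for each fixed larger scale $2^{-j}$, the "dangerous" translations form a set whose measure relative to the uniform distribution on $\{0,1\}^d$-valued components is controlled by $\Theta$ of the scale ratio, uniformly, with only the factor $2d$ from the number of faces and a factor $2$ from the two-sided neighbourhood of each face. One must also be slightly careful that shifts at intermediate scales $\ell(I)\le 2^{-j}<\ell(J)$ move $I$ and $J$ together in a way that still traces out the full range of relative positions; this is handled by the observation that the common translation component cancels, so the relative position is governed precisely by the independent "intermediate" components. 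Beyond this, everything is an elementary geometric series and an integral comparison.
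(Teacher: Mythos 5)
Your argument is correct and is essentially the standard proof that the paper itself defers to \cite[Lemma~2.3]{Hyt:dyadic op.}: a per-scale boundary-layer estimate of order $d\,\Theta(2^{-k})$ using the independence/uniform-distribution observation of Lemma \ref{lem:indep}, a union bound over scales $k\ge r$, and a sum-to-integral comparison. The only loose end is bookkeeping of absolute constants (your comparison of $\sum_{k\ge r}\Theta(2^{-k})$ with the integral naturally yields a factor like $2/\ln 2$ rather than exactly $2$, so one should either tighten the per-scale count or accept a slightly larger numerical constant), which is immaterial since only $\pi_{\bad}<1$ for large $r$ is ever used.
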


The proof of the previous lemma can be found in \cite[Lemma~2.3]{Hyt:dyadic op.}.

\subsection{Wavelet functions}

We introduce the notion of the smooth wavelet functions with compact support associated to any given dyadic system $\mathscr{D}$. Such wavelets were originally constructed by I. Daubechies \cite{ID} but in this paper we will follow \cite{YM}.

In \cite[Chapter~3]{YM} one can find the construction of the smooth wavelets with compact support for $d=1$. Moreover, once the $1$-dimensional wavelets and the related father wavelets $\psi^0=\phi$ are available, the $d$-dimensional wavelets can be constructed by $\psi^{\eta}(x)=\prod_{i=1}^d\psi^{\eta_i}(x_i)$, where $\eta\in\{0,1\}^d\setminus\{0\}$ and we make the convention $\psi^1=\psi$.

\begin{definition}
We say that $\big\{\psi_I^\eta\big\}_{I\in\mathscr{D},\eta\in\{0,1\}^d\setminus\{0\}}$ is a system of wavelets with parameters $(m,u,v)$ if
\begin{equation*}
  \psi_I^{\eta}(x):=2^{{dk}/2}\psi^{\eta}(2^{k}x-l),
\end{equation*}
for some $d$-dimensional wavelet $\psi^{\eta},$ $I=2^{-k}([0,1)^d+l)$, and this collection has the following fundamental properties of a wavelet basis: 
\begin{enumerate}[label=(\roman*)]
  \item being an orthonormal basis of $L^2(\R^d)$
  \item localization: $\supp\psi_I^\eta\subset mI$
  \item regularity: $\abs{\partial^{\alpha}\psi_I^\eta}\leq C{\ell(I)}^{-\abs{\alpha}}\abs{I}^{-1/2}$, for every multi-index $\alpha\in\N$ of order $\abs{\alpha}\leq u$
  \item cancellation: $\int x^\alpha\psi_I^\eta(x)\ud x=0$, when $\abs{\alpha}\leq v.$
\end{enumerate}
\end{definition}
Here $u,v\in\N$ are two parameters that may or may not be equal. Note that Haar functions correspond to $m=1$, $u=v=0$, but in general $m>1$.

For a fixed $\mathscr{D}$, all the wavelet functions $\psi_I^{\eta}$, $I\in\mathscr{D}$ and $\eta\in\{0,1\}^d\setminus\{0\}$, form an orthonormal basis of $L^2(\R^d)$. Hence any function $f\in L^2(\R^d)$ has the orthogonal expansion
\begin{equation*}
  f=\sum_{I\in\mathscr{D}}\sum_{\eta\in\{0,1\}^d\setminus\{0\}}\pair{f}{\psi_I^{\eta}}\psi_I^{\eta}.
\end{equation*}
Since the different $\eta$'s seldom play any major role, this will be often abbreviated (with slight abuse of language) simply as
\begin{equation*}
  f=\sum_{I\in\mathscr{D}}\pair{f}{\psi_I}\psi_I,
\end{equation*}
and the finite summation over $\eta$ is understood implicitly.

\subsection{Wavelet shifts}

A wavelet shift with parameters $i,j\in\N:=\{0,1,2,\ldots\}$ is an operator of the form
\begin{equation*}
  Sf=\sum_{K\in\mathscr{D}}A_K f,\qquad
  A_K f=\sum_{\substack{I,J\in\mathscr{D}:I,J\subseteq K \\ \ell(I)=2^{-i}\ell(K)\\ \ell(J)=2^{-j}\ell(K)}}a_{IJK}\pair{f}{\psi_I}\psi_J,
\end{equation*}
where $\psi_I$ is a wavelet function on $I$ (similarly $\psi_J$), and the $a_{IJK}$ are coefficients with
\begin{equation}\label{eq: bound for the coefficients}
  \abs{a_{IJK}}\leq\frac{\sqrt{\abs{I}\abs{J}}}{\abs{K}}.
\end{equation}

\begin{remark}
The dyadic shifts considered in many other papers correspond to the special case of Haar wavelets. 
\end{remark}

The wavelet shift is called {\em good} if all dyadic cubes $I,J,K$ such that $a_{IJK}\neq0$ satisfy $mI, mJ\subset K$; otherwise, it is called bad. We note that this condition is automatic when $m=1$, but not in general. Nevertheless, a closely related notion of good shifts already appeared in \cite{Hytonen:A2}, where it played a certain role. This notion was not needed in the many works that appeared on this topic since \cite{Hytonen:A2}. The $L^2$ boundedness of the good wavelet shift $S$ is  a consequence of the following facts:

\begin{lemma}\label{lem: pointwise bound for A_K}
If $S$ is a good wavelet shift then $A_K$ indicates an ''averaging operator'' on $K$ which satisfies:
\begin{equation*}
  \abs{A_K f}\lesssim 1_K\frac{1}{|K|}\int_K\abs{f}.
\end{equation*}
\end{lemma}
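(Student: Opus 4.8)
The plan is to reduce the pointwise bound to a direct estimate on $A_K f$ by exploiting the localization, regularity, and cancellation of the wavelets together with the coefficient bound \eqref{eq: bound for the coefficients} and the goodness hypothesis $mI, mJ \subset K$ whenever $a_{IJK} \neq 0$. First I would fix $K$ and write out
\begin{equation*}
  A_K f = \sum_{\substack{I,J \subseteq K \\ \ell(I) = 2^{-i}\ell(K) \\ \ell(J) = 2^{-j}\ell(K)}} a_{IJK} \pair{f}{\psi_I} \psi_J .
\end{equation*}
Because $S$ is good, every $\psi_J$ appearing here is supported in $mJ \subset K$, so $A_K f$ is supported in $K$; this gives the factor $1_K$. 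For the size, I would estimate $\abs{\pair{f}{\psi_I}} \leq \Norm{\psi_I}{1} \cdot \Norm{f 1_{mI}}{\infty}$ is the wrong direction, so instead I would use $\abs{\pair{f}{\psi_I}} \leq \Norm{\psi_I}{L^1(mI)} \sup_{mI}\abs{f}$; but since we want an $L^1$-average of $f$ over $K$, the cleaner route is $\abs{\pair{f}{\psi_I}} \leq \abs{mI}^{1/2} \Norm{\psi_I}{\infty} \cdot \big(\text{something}\big)$. Let me restart the size estimate more carefully: by property (iii) with $\abs{\alpha}=0$, $\Norm{\psi_I}{\infty} \lesssim \abs{I}^{-1/2}$, hence $\abs{\pair{f}{\psi_I}} \lesssim \abs{I}^{-1/2} \int_{mI} \abs{f} \leq \abs{I}^{-1/2} \int_K \abs{f}$, using $mI \subset K$. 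Likewise $\Norm{\psi_J}{\infty} \lesssim \abs{J}^{-1/2}$.

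Combining these with $\abs{a_{IJK}} \leq \abs{I}^{1/2}\abs{J}^{1/2}/\abs{K}$ gives, pointwise on $K$,
\begin{equation*}
  \abs{A_K f} \leq \sum_{I,J} \frac{\abs{I}^{1/2}\abs{J}^{1/2}}{\abs{K}} \cdot \abs{I}^{-1/2}\Big(\int_K \abs{f}\Big) \cdot \abs{J}^{-1/2} 1_{mJ} \lesssim \frac{1}{\abs{K}} \Big(\int_K \abs{f}\Big) \sum_{I,J} \abs{I}^{1/2}\abs{J}^{1/2} \abs{I}^{-1/2}\abs{J}^{-1/2} 1_{mJ}.
\end{equation*}
The $I$-sum has $2^{di}$ terms (the children of $K$ at generation $i$) and each contributes a constant, giving $2^{di}$. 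For the $J$-sum, the functions $1_{mJ}$ with $J$ ranging over generation-$j$ subcubes of $K$ have bounded overlap (each point of $K$ lies in at most $\sim m^d$ of the dilates $mJ$), so $\sum_J 1_{mJ} \lesssim m^d$ pointwise. This yields $\abs{A_K f} \lesssim_{i,j,m,d} \frac{1}{\abs{K}}\int_K \abs{f}$ on $K$ and $0$ off $K$, which is exactly the claimed bound (the implied constant is allowed to depend on the shift parameters $i,j$ and on $m,d$, all of which are fixed).

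The main obstacle — and the place where goodness is essential — is the support issue: without $mI \subset K$ and $mJ \subset K$, the wavelet $\psi_I$ would see values of $f$ outside $K$, and $\psi_J$ would spill outside $K$, so neither the localization $1_K$ nor the clean $K$-average $\frac{1}{\abs{K}}\int_K\abs{f}$ would survive; one would instead pick up contributions from dilates of $K$, breaking the averaging-operator structure that is used (via the standard $L^2$-boundedness argument for sparse-type averaging operators, cf. the Haar case) to conclude $\Norm{S}{L^2 \to L^2} \lesssim 1$. A secondary point to handle with a little care is the bounded-overlap count for the dilated cubes $\{mJ\}$: since $m > 1$ in general, the $mJ$ are genuinely overlapping, but because they are translates of a fixed cube on a single scale the overlap is bounded by a constant depending only on $m$ and $d$, which is all that is needed.
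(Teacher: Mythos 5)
Your overall structure matches the paper's proof --- goodness gives $mI,mJ\subset K$, hence the factor $1_K$ from $\supp\psi_J\subset mJ\subset K$, the coefficient bound \eqref{eq: bound for the coefficients} combines with $\Norm{\psi_I}{\infty},\Norm{\psi_J}{\infty}\lesssim\abs{I}^{-1/2},\abs{J}^{-1/2}$, and the $J$-sum is controlled by the bounded overlap of the dilated cubes $mJ$. But there is a genuine gap in your treatment of the $I$-sum. By replacing $\int_{mI}\abs{f}$ with $\int_K\abs{f}$ separately for each of the $2^{di}$ cubes $I$ at generation $i$ and then summing, you pick up the factor $2^{di}$, which you then propose to absorb into the implied constant because ``$i,j$ are fixed.'' This is not acceptable here: the bound must be uniform in the shift parameters $(i,j)$, since in the representation theorem the shifts $S^{ij}$ range over all $i,j\geq 1$ and their $L^2$ norms must be bounded by an absolute constant so that the decay factors $2^{-(s-\epsilon)\max(i,j)}$ alone make the series converge. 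With your constant, $\Norm{S^{ij}}{L^2\to L^2}$ would grow like $2^{d\max(i,j)}$ and the expansion would only converge for $s>d$, defeating the purpose of the theorem.

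The fix is exactly the step the paper takes and which you already use on the $J$-side: do not estimate $\int_{mI}\abs{f}$ term by term, but keep the localization and write
\begin{equation*}
  \sum_{\substack{I\subseteq K\\ \ell(I)=2^{-i}\ell(K)}}\int_{mI}\abs{f}
  =\int\abs{f}\Big(\sum_{\substack{I\subseteq K\\ \ell(I)=2^{-i}\ell(K)}}1_{mI}\Big)
  \lesssim_{m,d}\int_K\abs{f},
\end{equation*}
using the bounded overlap of the cubes $mI$ (at a single scale, each point lies in at most $\sim m^d$ of them) together with $mI\subset K$. This yields the claimed bound with a constant depending only on $m$ and $d$, uniformly in $i$ and $j$.
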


\begin{proof}
Since $S$ is a good wavelet shift, if $a_{IJK}\neq0$ then $mJ\subset K$ and $mI\subset K$, for fixed $m\ge1$, i.e., $mJ$ and $mI$ are good cubes inside $K$. 

Using the bound (\ref{eq: bound for the coefficients}) for the coefficients $a_{IJK}$, the regularity of $\psi_I$, and the previous fact that $mJ\subset K$, $mI\subset K$, for fixed $m\ge1$, we have
\begin{equation*}
\begin{split}
  |A_K f|&\lesssim\sum_{\substack{I,J\in\mathscr{D}:I,J\subseteq K\\\ell(I)=2^{-i}\ell(K)\\ \ell(J)=2^{-j}\ell(K)}}\frac{\sqrt{\abs{I}\abs{J}}}{\abs{K}}\frac{1_{mJ}}{\sqrt{\abs{J}}}\cdot\int\frac{|f|1_{mI}}{\sqrt{\abs{I}}}\\
  &=\frac{1}{\abs{K}}\Big(\sum_{\substack{J\in\mathscr{D}:J\subseteq K\\ \ell(J)=2^{-j}\ell(K)}}1_{mJ}\Big)\int|f|\Big(\sum_{\substack{I\in\mathscr{D}:I\subseteq K\\\ell(I)=2^{-i}\ell(K)}}1_{mI}\Big)\\
  &\lesssim 1_K\frac{1}{|K|}\int_K\abs{f},
\end{split}
\end{equation*}
where the (easy to check) bounded overlap of the cubes $mJ$ (respectively $mI$) was used in the last step.
\end{proof}

\begin{corollary}\label{corro: L^p bound fo rA_K}
Let $S$ be a good wavelet shift. The following estimate for the ''averaging operator'' $A_K$ holds:
\begin{equation*}
  \Norm{A_K f}{L^p}\lesssim\Norm{f}{L^p},\quad\forall p\in[1,\infty].
\end{equation*}
\end{corollary}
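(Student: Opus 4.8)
The plan is to read everything off the pointwise estimate just established in Lemma~\ref{lem: pointwise bound for A_K}, namely $\abs{A_K f}\lesssim 1_K\cdot\frac{1}{\abs{K}}\int_K\abs{f}$, whose implied constant depends only on $m$ and $d$ (it comes from the bounded overlap of the dilated cubes $mI$, $mJ$) and is in particular uniform in $K$. Fix $p\in[1,\infty)$. Since $A_K f$ is supported in $K$, raising the pointwise bound to the $p$-th power, integrating, and applying Jensen's (equivalently H\"older's) inequality to the average gives
\begin{equation*}
  \Norm{A_K f}{L^p}^p\lesssim\int_K\Big(\frac{1}{\abs{K}}\int_K\abs{f}\Big)^p\ud x
  =\abs{K}\Big(\frac{1}{\abs{K}}\int_K\abs{f}\Big)^p
  \leq\abs{K}\cdot\frac{1}{\abs{K}}\int_K\abs{f}^p
  =\int_K\abs{f}^p\leq\Norm{f}{L^p}^p,
\end{equation*}
which is the claim for $p<\infty$.

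For the remaining endpoint $p=\infty$ one argues even more directly: $\frac{1}{\abs{K}}\int_K\abs{f}\leq\Norm{f}{L^\infty}$, so the pointwise bound immediately yields $\Norm{A_K f}{L^\infty}\lesssim\Norm{f}{L^\infty}$. (Alternatively, once the estimates at $p=1$ and $p=\infty$ are in hand, the intermediate range $1<p<\infty$ also follows by Riesz--Thorin interpolation; the direct H\"older argument above just avoids invoking it.)

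There is no genuine obstacle here --- the corollary is a routine consequence of Lemma~\ref{lem: pointwise bound for A_K}. The one point worth recording explicitly is that the implied constant is \emph{uniform in $K\in\mathscr{D}$}, since this is exactly what will be needed later when the operator $S=\sum_{K\in\mathscr{D}}A_K$ is assembled from its pieces $A_K$ and its $L^2$ (and $L^p$) boundedness is deduced.
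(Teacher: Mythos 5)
Your proof is correct and is essentially the paper's own argument: both deduce the corollary directly from the pointwise bound of Lemma~\ref{lem: pointwise bound for A_K} together with H\"older/Jensen applied to the average $\frac{1}{|K|}\int_K|f|$ and the fact that $A_Kf$ is supported in $K$ (the paper writes this in one line as $|K|^{1/p}\cdot\frac{1}{|K|}\cdot|K|^{1/p'}=1$, valid for all $p\in[1,\infty]$, whereas you treat $p=\infty$ separately). Your remark that the implied constant is uniform in $K$ is a worthwhile observation, as this uniformity is indeed what the subsequent $L^2$ bound for $S$ relies on.
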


\begin{proof}
Applying the pointwise bound of Lemma \ref{lem: pointwise bound for A_K} to each $A_K$ we have
\begin{equation*}
  \Norm{A_K f}{L^p}\lesssim\BNorm{1_K\frac{1}{|K|}\int_K\abs{f}}{L^p}\lesssim\abs{K}^{1/p}\frac{1}{\abs{K}}\abs{K}^{1/p'}\Norm{f}{L^p}=\Norm{f}{L^p}.
\end{equation*}
\end{proof}

\begin{lemma}
Let $S$ be a good wavelet shift. Then
\begin{equation*}
  \Norm{Sf}{L^2}\lesssim\Norm{f}{L^2}.
\end{equation*}
\end{lemma}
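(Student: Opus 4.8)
The plan is to reduce the estimate to Corollary~\ref{corro: L^p bound fo rA_K} (applied with $p=2$) on each building block $A_K$ separately, by exploiting two orthogonality structures hidden in the wavelet expansion: one on the output side and one on the input side. No genuinely new estimate is needed beyond the cited Lemma and Corollary; everything is a bookkeeping of orthogonal pieces.

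First I would record that, for each $K\in\mathscr{D}$, the function $A_K f$ lies in the subspace $V_K\subseteq L^2(\R^d)$ spanned by the wavelets $\psi_J$ with $J\in\mathscr{D}$, $J\subseteq K$ and $\ell(J)=2^{-j}\ell(K)$ (and, implicitly, all admissible $\eta$). These subspaces are pairwise orthogonal: if $\ell(K)\neq\ell(K')$ the relevant cubes $J$ and $J'$ have different sidelengths, while if $\ell(K)=\ell(K')$ but $K\neq K'$, then $K$ and $K'$, hence all the $J\subseteq K$ and all the $J'\subseteq K'$, are disjoint dyadic cubes; in either case $\psi_J$ and $\psi_{J'}$ are distinct members of the orthonormal wavelet basis, so $V_K\perp V_{K'}$. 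Consequently $\Norm{Sf}{L^2}^2=\sum_{K\in\mathscr{D}}\Norm{A_K f}{L^2}^2$. (Equivalently, one may first sum over a fixed sidelength $\ell(K)=2^{-n}$, where the goodness hypothesis $mJ\subset K$ makes the functions $A_K f$ disjointly supported, and then sum over $n$, where the partial sums $S_n f:=\sum_{\ell(K)=2^{-n}}A_K f$ lie in mutually orthogonal wavelet subspaces.) Symmetrically, writing $P_K$ for the orthogonal projection onto the span of the $\psi_I$ with $I\subseteq K$, $\ell(I)=2^{-i}\ell(K)$, one has $A_K f=A_K P_K f$ straight from the definition of $A_K$; and, $i$ being fixed, every $I\in\mathscr{D}$ belongs to the index set of exactly one $P_K$, namely the one with $\ell(K)=2^i\ell(I)$. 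Hence the $P_K$ project onto mutually orthogonal subspaces that together exhaust $L^2(\R^d)$, so by Pythagoras $\sum_{K\in\mathscr{D}}\Norm{P_K f}{L^2}^2=\Norm{f}{L^2}^2$.

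Combining these two facts with Corollary~\ref{corro: L^p bound fo rA_K} at $p=2$ (or, equivalently, the pointwise bound of Lemma~\ref{lem: pointwise bound for A_K} followed by Cauchy--Schwarz over $K$), which yields $\Norm{A_K f}{L^2}=\Norm{A_K P_K f}{L^2}\lesssim\Norm{P_K f}{L^2}$, we get
\[
  \Norm{Sf}{L^2}^2=\sum_{K\in\mathscr{D}}\Norm{A_K f}{L^2}^2\lesssim\sum_{K\in\mathscr{D}}\Norm{P_K f}{L^2}^2=\Norm{f}{L^2}^2 .
\]
The one point that requires genuine care is the bookkeeping in the two orthogonality reductions — in particular, confirming that distinct cubes $K,K'$ always produce orthogonal output spaces $V_K\perp V_{K'}$ regardless of how $K$ and $K'$ are nested, and observing that the goodness of $S$ enters the argument only through the cited Lemma/Corollary (and, in the alternative scale-by-scale route, through the disjointness of the supports of the $A_K f$ at a fixed sidelength).
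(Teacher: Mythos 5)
Your proof is correct and is essentially the paper's own argument: both decompose $Sf$ into the blocks $A_Kf=\prob_K^{j}A_K\prob_K^{i}f$, use the mutual orthogonality (for fixed $i$, resp.\ fixed $j$) of the wavelet subspaces attached to the cubes $K$ to apply Pythagoras on both the output and input sides, and invoke the $L^2$ bound for $A_K$ from Corollary~\ref{corro: L^p bound fo rA_K}. Your extra bookkeeping (the case analysis on $\ell(K)$ versus $\ell(K')$, and the observation that goodness enters only through the cited Lemma/Corollary) is a correct elaboration of details the paper leaves implicit.
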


\begin{proof}
We use the orthonormality of the wavelet functions. Let
\begin{equation*}
  {\mathcal{H}}_{K}^{i}:=\lspan\{\psi_I:I\subseteq K, \ell(I)=2^{-i}\ell(K)\},
\end{equation*}
and let $\prob_K^i$ be the orthogonal projection of $L^2$ onto this subspace. For a fixed $i$, these spaces are orthogonal, as $K$ ranges over $\mathscr{D}$.

We have $\pair{f}{\psi_I}=\pair{\prob_K^if}{\psi_I}$ for all $I$ appearing in $A_K$, and hence $A_{K}f=A_K\prob_K^if$. Also, $\psi_J=\prob_K^j\psi_J$ for all $J$ appearing in $A_K$, and hence $A_{K}f=\prob_K^{j}A_K{f}$. We can apply these identities and Pythagoras' theorem to the result that:
\begin{equation*}
\begin{split}
  \Norm{Sf}{L^2}&=\BNorm{\sum_{K\in\mathscr{D}}\prob_K^{j}A_K\prob_K^if}{L^2}\\
  &=\Big(\sum_{K\in\mathscr{D}}\Norm{\prob_K^{j}A_K\prob_K^if}{L^2}^2\Big)^{1/2}\\
  &\lesssim\Big(\sum_{K\in\mathscr{D}}\Norm{\prob_K^{i}f}{L^2}^2\Big)^{1/2}\\
  &\lesssim\Norm{f}{L^2},
\end{split}
\end{equation*}
where we used the $L^2$ boundedness of $A_K$ from Corollary \ref{corro: L^p bound fo rA_K} in the second-to-last step.
\end{proof}

\section{The dyadic representation theorem for smooth compactly supported wavelets}\label{sec:representation}

Let $T$ be a Calder\'on--Zygmund operator on $\R^d$. That is, it acts on a suitable dense subspace of functions in $L^2(\R^d)$ (for the present purposes, this class should at least contain the indicators of cubes in $\R^d$) and has the kernel representation
\begin{equation*}
  Tf(x)=\int_{\R^d}K(x,y)f(y)\ud y,\qquad x\notin\supp f.
\end{equation*}
Moreover, we assume that the kernel is $s$-times differentiable and satisfies the {\em higher order standard estimate}:
\begin{equation}\label{eq:high. ord. st. estim.}
\begin{split}
  \abs{\partial^{\alpha}K(x,y)}&\leq\frac{C_1}{\abs{x-y}^{d+\abs{\alpha}}}
\end{split}
\end{equation}
for all $x,y\in\R^d$, $x\neq y$, $\alpha\in\N$ and $\abs{\alpha}\leq s$. Let us denote the smallest admissible constant $C_1$ by $\Norm{K}{CZ_s}$.

We say that $T$ is a bounded Calder\'on--Zygmund operator, if in addition $T:L^2(\R^d)\to L^2(\R^d)$, and we denote its operator norm by $\Norm{T}{L^2\to L^2}$.


Under such assumptions, we can also define the action of $T$ on the space $\mathcal{P}_s$ of polynomials of degree less than $s$. This is well known, and the reader can consult \cite{Torres} (see also \cite{PT}) for a comprehensive discussion. The necessary set-up for our needs is as follows:

If $\psi\in C_c^s(B(0,R))$ satisfies $\int_{\R^d}P\psi=0$ for all $P\in\mathcal{P}_s$, then we have, for $\abs{x}>2R$,
\begin{equation*}
  T\psi(x)=\int_{\R^d} K(x,y)\psi(y)\ud y 
  =\int_{B(0,R)} \Big(K(x,y)-\sum_{\mathclap{\substack{0\leq|\alpha|<s}}}\frac{y^{\alpha}}{\alpha!}\partial_2^{\alpha}K(x,0)\Big)\psi(y)\ud y
\end{equation*}
and hence
\begin{multline*}
  \abs{T\psi(x)}\leq\int_{B(0,R)} s\Big(\int_{0}^{1}\sum_{\substack{|\alpha|=s}}\frac{\abs{y}^{\abs{\alpha}}}{\alpha!}\abs{\partial_2^{\alpha}K(x,ty)}(1-t)^{s-1}\ud t\Big)\abs{\psi(y)}\ud y \\
  \lesssim\Norm{K}{CZ_s}\int_{B(0,R)} R^s\Big(\int_0^1\frac{s}{\abs{x-ty}^{d+s}}(1-t)^{s-1}\ud t\Big)\abs{\psi(y)}\ud y \\
  \lesssim\Norm{K}{CZ_s}\frac{R^s}{\abs{x}^{d+s}}\Norm{\psi}{1}.
\end{multline*}
This expression is integrable against any $P\in\mathcal{P}_s$ over the region $B(0,2R)^c$. On the other hand, it is clear that $T\psi\in L^2(\R^d)\subset L^1_{\operatorname{loc}}(\R^d)$ is also integrable against $P\in\mathcal{P}_s \subset L^\infty_{\operatorname{loc}}(\R^d)$ over $B(0,2R)$. Thus
\begin{equation*}
  \pair{T^* P}{\psi}:=\pair{P}{T\psi}:=\int_{\R^d}P(x)T\psi(x)\ud x
\end{equation*}
is well defined for every $P\in\mathcal{P}_s$ and every $\psi\in C_c^s(\R^d)$ that is orthogonal to $\mathcal{P}_s$. This defines $T^*P$ as a functional on the said subspace of $C_c^s(\R^d)$, and the definition of $TP$ can be given in a similar way, since the adjoint $T^*$ satisfies the same assumptions.

We say that $T$ maps $\mathcal{P}_s$ into itself, if $\pair{TP}{\psi}=0$ for all $P\in\mathcal{P}_s$ and all $\psi\in C_c^s(\R^d)$ that are orthogonal to $\mathcal{P}_s$.


Here is our main result:
\begin{theorem}\label{thm:formula}
Let $T$ be a bounded Calder\'on--Zygmund operator with a kernel satisfying (\ref{eq:high. ord. st. estim.})  and suppose that both $T$ and $T^*$ map $\mathcal{P}_s$ into itself, in the sense defined above.
Moreover, let the wavelet $\psi_I$ satisfy the regularity and cancellation property for $u=s$ and $v=s-1$, respectively. Then for any given $\epsilon>0$, $T$ has an expansion, say for $f,g\in C^1_c(\R^d)$,
\begin{equation*}
  \pair{g}{Tf}=c\cdot\big(\Norm{K}{CZ_s}+\Norm{T}{L^2\to L^2}\big)\cdot\Exp_{\omega} \sum_{i,j=1}^{\infty}2^{-(s-\epsilon)\max(i,j)}\pair{g}{S^{ij}_{\omega}f},
\end{equation*}
where $c$ depends only on d, s and $\epsilon$, and $S^{ij}_{\omega}$ is a good wavelet shift of parameters $(i,j)$ on the dyadic system $\mathscr{D}^{\omega}$.
\end{theorem}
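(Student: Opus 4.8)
The plan is to follow the architecture of the Haar dyadic representation theorem, but with the smooth compactly supported wavelets $\psi^{\eta}_I$ in place of Haar functions, so that their higher cancellation ($v=s-1$) and regularity ($u=s$) can be converted into faster decay. First I would choose the modulus of continuity $\Theta(t)=t^{\theta}$ with $\theta=\theta(\epsilon,s,d)>0$ small (to be calibrated below), and then $r=r(d,\theta)$ large enough that $\pi_{\bad}<1$ by Lemma~\ref{lem:value of pi_bad}. Expanding $f,g\in C^1_c(\R^d)$ in the wavelet basis of a random system $\mathscr{D}^{\omega}$ and applying the wavelet expansion of $T$ (Proposition~\ref{prop:expansion of T}) yields
\[
  \pair{g}{Tf}=\Exp_{\omega}\sum_{I,J\in\mathscr{D}^{\omega}}\pair{g}{\psi_J}\,\pair{\psi_J}{T\psi_I}\,\pair{\psi_I}{f}.
\]
Since, for a fixed reference pair, the positions of the translated cubes $I\dot+\omega,J\dot+\omega$ and the badness of the smaller one are independent (Lemma~\ref{lem:indep}), one may insert the indicator that the smaller cube is good at the cost of the harmless constant $\pi_{\good}^{-1}$, and henceforth assume the smaller of $I,J$ good.

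The core of the argument is the pointwise matrix estimate: for a pair $(I,J)$ with, say, $\ell(I)\le\ell(J)$ and the smaller cube good, if $K=K(I,J)$ is the smallest cube of $\mathscr{D}^{\omega}$ with $mI\cup mJ\subset K$ and $\ell(I)=2^{-i}\ell(K)$, $\ell(J)=2^{-j}\ell(K)$, then
\[
  \abs{\pair{\psi_J}{T\psi_I}}\le C(d,s)\big(\Norm{K}{CZ_s}+\Norm{T}{L^2\to L^2}\big)\,2^{-(s-\epsilon)\max(i,j)}\,\frac{\sqrt{\abs{I}\abs{J}}}{\abs{K}}.
\]
Because $m>1$, this choice of $K$ forces $i,j\ge1$; this is the structural reason why the paraproduct terms familiar from the Haar case do not occur here. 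I would prove this estimate by splitting the double sum into five geometric regimes: (1) $\max(i,j)$ bounded by a constant $C_0(m)$, handled crudely by the $L^2$-boundedness of $T$ together with the $\abs{\alpha}=0$ case of the kernel bound; (2) and (3) the regimes $\ell(I)<\ell(J)$ in which $mI$ is, respectively, far from and close to $mJ$ inside $K$; and (4) and (5) the symmetric regimes $\ell(I)>\ell(J)$, treated by duality with $T^*$ in place of $T$. In the "far" regime (2) one Taylor-expands $y\mapsto K(x,y)$ at the centre $c_I$ of $I$ to order $s-1$, uses the cancellation of $\psi_I$ and the higher order standard estimate \eqref{eq:high. ord. st. estim.}; the goodness of the smaller cube gives $\dist(mI,mJ)\gtrsim 2^{-\theta\max(i,j)}\ell(K)$, so the resulting bound carries the factor $2^{-(s-\theta s)\max(i,j)}\le 2^{-(s-\epsilon)\max(i,j)}$ provided $\theta s<\epsilon$ (away from a bounded corner the "far" regime in fact produces the full $2^{-s\max(i,j)}$).

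The main obstacle is the "close" regime (3), the analogue of the paraproduct: here the smaller cube $I$ sits deep inside, roughly, $mJ$, the singularity of the kernel lies inside the relevant region, and the kernel estimate alone is useless. I would split $\psi_J=Q+(\psi_J-Q)$ on the ball $B=B\big(c_I,\tfrac12\dist(I,\partial J)\big)$, with $Q$ the Taylor polynomial of $\psi_J$ at $c_I$ of degree $s-1$, and write $\pair{\psi_J}{T\psi_I}=\pair{\psi_J1_{B^c}}{T\psi_I}+\pair{Q1_B}{T\psi_I}+\pair{(\psi_J-Q)1_B}{T\psi_I}$. On $B^c$ the kernel is regular, and the cancellation of $\psi_I$ with \eqref{eq:high. ord. st. estim.} gives the decay $\abs{T\psi_I(x)}\lesssim\Norm{K}{CZ_s}\ell(I)^s\abs{I}^{1/2}\abs{x-c_I}^{-d-s}$, which, combined with the goodness bound $\dist(I,\partial J)\gtrsim 2^{-\theta(i-j)}\ell(J)$, controls the first term. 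For the second term, the hypothesis $T^*\colon\mathcal{P}_s\to\mathcal{P}_s$ and $\psi_I\perp\mathcal{P}_{s-1}$ force $\pair{Q}{T\psi_I}=\pair{T^*Q}{\psi_I}=0$, so $\pair{Q1_B}{T\psi_I}=-\pair{Q1_{B^c}}{T\psi_I}$ is again controlled by the decay of $T\psi_I$ off $I$. For the third term one uses $\abs{(\psi_J-Q)(x)}\lesssim\ell(J)^{-s}\abs{J}^{-1/2}\abs{x-c_I}^s$ on $B$ (the $C^s$-regularity of $\psi_J$), pairing with $T\psi_I$ split into the part in $2mI$ (estimated by the $L^2$-boundedness of $T$) and dyadic annuli (estimated by the kernel). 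All three contributions carry a gain $\gtrsim 2^{-(s-\theta s)(i-j)}$ up to a bounded complexity remainder $i\le C(s,d,\epsilon)\,j$, which is absorbed into regime (1). Collecting the five estimates, I would define $a_{IJK}$ to be $\pair{\psi_J}{T\psi_I}$ divided by $C(d,s)\big(\Norm{K}{CZ_s}+\Norm{T}{L^2\to L^2}\big)2^{-(s-\epsilon)\max(i,j)}$, so that \eqref{eq: bound for the coefficients} holds and each $S^{ij}_{\omega}$ is a good wavelet shift; its $L^2$-boundedness (Lemma~\ref{lem: pointwise bound for A_K}, Corollary~\ref{corro: L^p bound fo rA_K} and the ensuing estimate), together with the absolute convergence of the reorganised triple sum for $f,g\in C^1_c$ (from the decay of $\pair{f}{\psi_I}$ and $\pair{g}{\psi_J}$ at both small and large scales), completes the proof, with $c$ the sum over the five regimes of the resulting constants.
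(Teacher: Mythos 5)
Your proposal follows essentially the same route as the paper: the same random-dyadic/goodness reduction via Proposition \ref{prop:expansion of T}, the same five-way geometric decomposition (your regimes (1)--(3) correspond to the paper's $\sigma_{=},\sigma_{\operatorname{near}}$; $\sigma_{\operatorname{far}}$; and $\sigma_{\operatorname{between}},\sigma_{\operatorname{in}}$, with (4)--(5) the symmetric half), the same order-$s$ Taylor expansion of the kernel at $c_I$ for far cubes and of $\psi_J$ at $c_I$ combined with $T^{*}\colon\mathcal{P}_s\to\mathcal{P}_s$ and the cancellation of $\psi_I$ for close/contained cubes, and the same calibration of $\Theta(t)=t^{\theta}$ with $\theta$ small. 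The one slip is quantitative and harmless: the loss coming from $\dist(mI,mJ)^{-(d+s)}\gtrsim\big(2^{-\theta i}\ell(K)\big)^{-(d+s)}$ is $2^{\theta(d+s)i}$, so the condition should be $\theta(d+s)<\epsilon$ (the paper takes $\theta=\epsilon/(d+s)$) rather than your stated $\theta s<\epsilon$ --- immaterial since you left $\theta=\theta(\epsilon,s,d)$ free to be calibrated.
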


The following remark shows that the assumption that $T$ and $T^*$ map $\mathcal{P}_s$ into itself follows from the other assumptions of Theorem \ref{thm:formula}, if in addition the operator $T$ is translation invariant.

\begin{remark}
Let $T$ be a bounded Calder\'on--Zygmund operator with a kernel satisfying (\ref{eq:high. ord. st. estim.}) and suppose in addition that $T$ is translation invariant.
Then both $T$ and $T^*$ map $\mathcal{P}_s$ into itself.
\end{remark}

\begin{proof}
It is enough to consider just $T$, since all the assumptions, and hence the conclusions, pass to the adjoint $T^*$. For the result concerning $T$, we refer the reader to \cite[Proposition 2.2.17]{Torres}.
\end{proof}

A key to the proof of the dyadic representation is a random expansion of $T$ in terms of wavelet functions $\psi_I$, where the bad cubes are avoided:

\begin{proposition}\label{prop:expansion of T}
Let $T\in\bddlin(L^2(\R^d))$ and $f\in C^1_c(\R^d)$, $g\in C^1_c(\R^d)$. Then the following representation is valid:
\begin{equation*}
  \pair{g}{Tf}=\frac{1}{\pi_{\good}}\Exp_{\omega}\sum_{I,J\in\mathscr{D}^{\omega}}1_{\good}(\operatorname{smaller}\{I,J\})\cdot\pair{g}{\psi_{J}}\pair{\psi_{J}}{T\psi_{I}}\pair{\psi_{I}}{f},
\end{equation*}
where
\begin{equation*}
  \operatorname{smaller}\{I,J\}:=\begin{cases} I & \text{if }\ell(I)\leq\ell(J) \\ J & \text{if }\ell(I)>\ell(J), \end{cases}
\end{equation*}
and $\pi_{\good}:=1-\pi_{\bad}>0$.
\end{proposition}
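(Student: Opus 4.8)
The plan is to expand both $f$ and $g$ in the wavelet basis attached to a random dyadic system $\mathscr{D}^\omega$ and then symmetrize over $\omega$ with the badness indicator inserted. First I would write, for each fixed $\omega$, the trivial orthogonal expansions $f=\sum_{I\in\mathscr{D}^\omega}\pair{f}{\psi_I}\psi_I$ and $g=\sum_{J\in\mathscr{D}^\omega}\pair{g}{\psi_J}\psi_J$, which are legitimate in $L^2$ since $f,g\in C^1_c(\R^d)\subset L^2(\R^d)$; pairing with $T$ (bounded on $L^2$) gives, by continuity,
\begin{equation*}
  \pair{g}{Tf}=\sum_{I,J\in\mathscr{D}^\omega}\pair{g}{\psi_J}\pair{\psi_J}{T\psi_I}\pair{\psi_I}{f}.
\end{equation*}
Crucially the right-hand side does not depend on $\omega$, so I may take $\Exp_\omega$ of it for free. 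The point of doing so is to exploit Lemma \ref{lem:indep}: the position of the smaller cube $K:=\operatorname{smaller}\{I,J\}$ and its badness are independent, and by the symmetry observation preceding Lemma \ref{lem:value of pi_bad} the probability $\prob_\omega(K\ \bad)=\pi_{\bad}$ is the same for every cube. Hence inside the expectation one has $\Exp_\omega\big[1_{\good}(K)\,\Phi(I,J)\big]=\pi_{\good}\,\Exp_\omega\big[\Phi(I,J)\big]$ whenever $\Phi(I,J)$ is a function of the positions only (for fixed reference cubes), which is exactly the situation for the summand $\pair{g}{\psi_J}\pair{\psi_J}{T\psi_I}\pair{\psi_I}{f}$ once we index the sum by reference cubes $I_0,J_0\in\mathscr{D}^0$ and write $I=I_0\dot+\omega$, $J=J_0\dot+\omega$.

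Concretely, the key step is: for fixed reference cubes $I_0,J_0$ (hence fixed relative size, so $\operatorname{smaller}$ is determined at the reference level), split $\omega=(\omega_j)$ into the components with $2^{-j}<\ell(K_0)$, which determine the common translation and hence all the pairings, and the components with $2^{-j}\ge\ell(K_0)$, which determine only the badness of $K$. By independence of these blocks of coordinates,
\begin{equation*}
  \Exp_\omega\Big[1_{\good}(K_0\dot+\omega)\,\pair{g}{\psi_{J_0\dot+\omega}}\pair{\psi_{J_0\dot+\omega}}{T\psi_{I_0\dot+\omega}}\pair{\psi_{I_0\dot+\omega}}{f}\Big]
  =\pi_{\good}\,\Exp_\omega\Big[\pair{g}{\psi_{J_0\dot+\omega}}\pair{\psi_{J_0\dot+\omega}}{T\psi_{I_0\dot+\omega}}\pair{\psi_{I_0\dot+\omega}}{f}\Big].
\end{equation*}
Summing over $I_0,J_0\in\mathscr{D}^0$ (equivalently over $I,J\in\mathscr{D}^\omega$ inside the expectation), the right-hand side reassembles to $\pi_{\good}\,\Exp_\omega\sum_{I,J}\pair{g}{\psi_J}\pair{\psi_J}{T\psi_I}\pair{\psi_I}{f}=\pi_{\good}\pair{g}{Tf}$, while the left-hand side is $\pi_{\good}$ times the claimed formula; dividing by $\pi_{\good}>0$ finishes the proof.

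The main obstacle is purely one of justifying the interchange of $\Exp_\omega$ with the double sum over $I,J$, i.e.\ a Fubini/absolute-convergence issue: one must check that $\sum_{I,J\in\mathscr{D}^\omega}\abs{\pair{g}{\psi_J}}\,\abs{\pair{\psi_J}{T\psi_I}}\,\abs{\pair{\psi_I}{f}}$ is integrable in $\omega$, uniformly enough to apply Tonelli. For $f,g\in C^1_c$ and $T$ bounded on $L^2$ this is where the smoothness/cancellation of the wavelets and the decay $\abs{\pair{\psi_J}{T\psi_I}}\lesssim$ (a summable quantity in the scales and positions) enter — essentially the same kernel estimates that will later be used in Section \ref{estimating the different sums}, but here needed only in the crude form sufficient for absolute convergence. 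I would dispatch this by the standard argument: reduce to finitely many scales using the compact support of $f,g$ and the decay of $\pair{\psi_I}{f}$ in the scale of $I$ (which uses the cancellation $v\ge1$ of $\psi_I$ together with $f\in C^1_c$), and on the off-diagonal use the size estimate for $K$ together with the localization $\supp\psi_I\subset mI$; the details are routine and I would only sketch them. Everything else is the clean probabilistic identity above.
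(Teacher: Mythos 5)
Your overall strategy (expand in the wavelet basis for each fixed $\omega$, insert the goodness indicator, factor out $\pi_{\good}$ by independence, and justify the interchange of $\Exp_\omega$ with the sum) is the right shape and matches the proof the paper cites. However, your key displayed identity --- the factorization
\begin{equation*}
  \Exp_\omega\Big[1_{\good}(K_0\dot+\omega)\,\pair{g}{\psi_{J_0\dot+\omega}}\pair{\psi_{J_0\dot+\omega}}{T\psi_{I_0\dot+\omega}}\pair{\psi_{I_0\dot+\omega}}{f}\Big]
  =\pi_{\good}\,\Exp_\omega\Big[\cdots\Big]
\end{equation*}
applied \emph{term by term} for each fixed pair of reference cubes $(I_0,J_0)$ --- is false, and the failure is exactly the difficulty that the cited argument is built to circumvent. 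Say $\ell(I_0)\le\ell(J_0)$, so $K_0=I_0$. The badness of $I_0\dot+\omega$ is a function of $(\omega_j)_{2^{-j}\ge\ell(I_0)}$, while the position of $J_0\dot+\omega$ is a function of $(\omega_j)_{2^{-j}<\ell(J_0)}$; when $\ell(J_0)>\ell(I_0)$ these two index sets overlap in $\{j:\ell(I_0)\le 2^{-j}<\ell(J_0)\}$. There is no single ``common translation'' determining both positions from the fine coordinates alone: the translation of the larger cube contains the extra terms $\sum_{\ell(I)\le 2^{-j}<\ell(J)}2^{-j}\omega_j$, which also govern the relative position of $I$ inside the larger cubes and hence its badness. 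So $1_{\good}(I)$ and the summand are not independent for a fixed pair $(I_0,J_0)$, and Lemma \ref{lem:indep} cannot be invoked in the way you do.

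The repair is to apply independence only after summing over all $J$ of the larger scales at fixed $I$. By completeness of the wavelet basis,
\begin{equation*}
  \sum_{J:\ell(J)\ge\ell(I)}\pair{g}{\psi_J}\pair{\psi_J}{T\psi_I}
  =\pair{g}{T\psi_I}-\sum_{J:\ell(J)<\ell(I)}\pair{g}{\psi_J}\pair{\psi_J}{T\psi_I},
\end{equation*}
and every cube appearing on the right (including $I$ itself) has sidelength at most $\ell(I)$, so the right-hand side, multiplied by $\pair{\psi_I}{f}$, is a function of $(\omega_j)_{2^{-j}<\ell(I)}$ only. This aggregated random variable \emph{is} independent of $1_{\good}(I\dot+\omega)$ by Lemma \ref{lem:indep}, the expectation then factorizes as $\pi_{\good}$ times the expectation without the indicator, and summing over $I_0$ and adding the symmetric half $\ell(I)>\ell(J)$ reassembles $\pi_{\good}\pair{g}{Tf}$. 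Your Fubini/absolute-convergence caveat is legitimate but secondary; the essential missing idea is that the independence must be exploited at the level of the partial sum over the larger cube, not term by term.
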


\begin{proof}
The proof is analogous to the one given in \cite[Proposition~3.5]{Hyt:dyadic op.}, replacing the Haar functions $h_I$ and $h_J$ should be replaced by the wavelet functions $\psi_I$ and $\psi_J$, respectively. We provide the details for the convenience of the reader.

Recall that
\begin{equation*}
  f=\sum_{I\in\mathscr{D}^0}\pair{f}{\psi_{I\dot+\omega}}\psi_{I\dot+\omega}
\end{equation*}
for any fixed $\omega\in\Omega$; and we can also take the expectation $\Exp_{\omega}$ of both sides of this identity.

Let
\begin{equation*}
  1_{\good}(I\dot+\omega):=\begin{cases} 1, & \text{if $I\dot+\omega$ is good},\\ 0, & \text{else}\end{cases}
\end{equation*}
We make use of the above random wavelet expansion of $f$, multiply and divide by
\begin{equation*}
  \pi_{\good}=\prob_{\omega}(I\dot+\omega\good)=\Exp_{\omega}1_{\good}(I\dot+\omega),
\end{equation*}
and use the independence from Lemma~\ref{lem:indep} to get:
\begin{equation*}
\begin{split}
  \pair{g}{Tf}
  &=\Exp_{\omega}\sum_{I}\pair{g}{T\psi_{I\dot+\omega}}\pair{\psi_{I\dot+\omega}}{f} \\
  &=\frac{1}{\pi_{\good}}\sum_{I}\Exp_{\omega}[1_{\good}(I\dot+\omega)] \Exp_{\omega}[\pair{g}{T\psi_{I\dot+\omega}}\pair{\psi_{I\dot+\omega}}{f}] \\
  &=\frac{1}{\pi_{\good}}\Exp_{\omega}\sum_{I}1_{\good}(I\dot+\omega) \pair{g}{T\psi_{I\dot+\omega}}\pair{\psi_{I\dot+\omega}}{f}  \\
  &=\frac{1}{\pi_{\good}}\Exp_{\omega}\sum_{I,J}1_{\good}(I\dot+\omega) \pair{g}{\psi_{J\dot+\omega}}\pair{\psi_{J\dot+\omega}}{T\psi_{I\dot+\omega}}\pair{\psi_{I\dot+\omega}}{f}.
\end{split}
\end{equation*}
On the other hand, using independence again in half of this double sum, we have
\begin{equation*}
\begin{split}
  &\frac{1}{\pi_{\good}}\sum_{\ell(I)>\ell(J)}\Exp_{\omega}[1_{\good}(I\dot+\omega) \pair{g}{\psi_{J\dot+\omega}}\pair{\psi_{J\dot+\omega}}{T\psi_{I\dot+\omega}}\pair{\psi_{I\dot+\omega}}{f} ] \\
  &=\frac{1}{\pi_{\good}}\sum_{\ell(I)>\ell(J)}\Exp_{\omega}[1_{\good}(I\dot+\omega)]
     \Exp_{\omega}[ \pair{g}{\psi_{J\dot+\omega}}\pair{\psi_{J\dot+\omega}}{T\psi_{I\dot+\omega}}\pair{\psi_{I\dot+\omega}}{f} ] \\
   &= \Exp_{\omega}\sum_{\ell(I)>\ell(J)}
    \pair{g}{\psi_{J\dot+\omega}}\pair{\psi_{J\dot+\omega}}{T\psi_{I\dot+\omega}}\pair{\psi_{I\dot+\omega}}{f},
\end{split}
\end{equation*}
and hence
\begin{equation*}
\begin{split}
    \pair{g}{Tf}
    &= \frac{1}{\pi_{\good}}\Exp_{\omega}\sum_{\ell(I)\leq\ell(J)}
        1_{\good}(I\dot+\omega) \pair{g}{\psi_{J\dot+\omega}}\pair{\psi_{J\dot+\omega}}{T\psi_{I\dot+\omega}}\pair{\psi_{I\dot+\omega}}{f} \\
     &\qquad+\Exp_{\omega}\sum_{\ell(I)>\ell(J)}
        \pair{g}{\psi_{J\dot+\omega}}\pair{\psi_{J\dot+\omega}}{T\psi_{I\dot+\omega}}\pair{\psi_{I\dot+\omega}}{f}.
\end{split}
\end{equation*}
Comparison with the basic identity
\begin{equation}\label{eq:basic}
  \pair{g}{Tf}
  =\Exp_{\omega}\sum_{I,J}\pair{g}{\psi_{J\dot+\omega}}\pair{\psi_{J\dot+\omega}}{T\psi_{I\dot+\omega}}\pair{\psi_{I\dot+\omega}}{f}
\end{equation}
shows that
\begin{equation*}
\begin{split}
  &\Exp_{\omega}\sum_{\ell(I)\leq\ell(J)}
        \pair{g}{\psi_{J\dot+\omega}}\pair{\psi_{J\dot+\omega}}{T\psi_{I\dot+\omega}}\pair{\psi_{I\dot+\omega}}{f} \\
    &= \frac{1}{\pi_{\good}}\Exp_{\omega}\sum_{\ell(I)\leq\ell(J)}
        1_{\good}(I\dot+\omega) \pair{g}{\psi_{J\dot+\omega}}\pair{\psi_{J\dot+\omega}}{T\psi_{I\dot+\omega}}\pair{\psi_{I\dot+\omega}}{f}.
\end{split}  
\end{equation*}
Symmetrically, we also have
\begin{equation*}
\begin{split}
  &\Exp_{\omega}\sum_{\ell(I)>\ell(J)}
        \pair{g}{\psi_{J\dot+\omega}}\pair{\psi_{J\dot+\omega}}{T\psi_{I\dot+\omega}}\pair{\psi_{I\dot+\omega}}{f} \\
    &= \frac{1}{\pi_{\good}}\Exp_{\omega}\sum_{\ell(I)>\ell(J)}
        1_{\good}(J\dot+\omega) \pair{g}{\psi_{J\dot+\omega}}\pair{\psi_{J\dot+\omega}}{T\psi_{I\dot+\omega}}\pair{\psi_{I\dot+\omega}}{f},
\end{split}  
\end{equation*}
and this completes the proof.
\end{proof}

For the analysis of the series appearing in Proposition \ref{prop:expansion of T} we recall the notion of the long distance \cite[Definition~6.3]{NTV}
\begin{equation*}
  D(I,J):=\ell(I)+\dist(I,J)+\ell(J). 
\end{equation*}
We focus on the summation inside $\Exp_{\omega}$, for a fixed value of $\omega\in\Omega$, and manipulate it into the required form. Moreover, we will focus on the half of the sum with $\ell(J)\geq\ell(I)$, the other half being handled symmetrically. We further divide this sum into the following parts:
\begin{equation*}
\begin{split}
  \sum_{\ell(I)\leq\ell(J)}&=\sum_{\substack{\dist(I,J)>\ell(J)(\ell(I)/\ell(J))^\theta\\ \dist(mI,mJ)>\frac{1}{2}D(I,J)}}+\sum_{\substack{\dist(I,J)>\ell(J)(\ell(I)/\ell(J))^\theta\\ \dist(mI,mJ)\leq\frac{1}{2}D(I,J)}}+\sum_{I\subsetneq J}+\sum_{I=J}\\
  &\qquad+\sum_{\substack{\dist(I,J)\leq\ell(J)(\ell(I)/\ell(J))^\theta\\ I\cap J=\varnothing}}\\
  &=:\sigma_{\operatorname{far}}+\sigma_{\operatorname{between}}+\sigma_{\operatorname{in}}+\sigma_{=}+\sigma_{\operatorname{near}}.
\end{split}
\end{equation*}
We observe that the main difference in the division of the previous sum and the one in \cite[after the Proposition~3.5]{Hyt:dyadic op.} is that the sum $\sigma_{\operatorname{out}}$ in \cite{Hyt:dyadic op.} has been split into $\sigma_{\operatorname{far}}$ and $\sigma_{\operatorname{between}}$, which are handled differently. Regarding the sum $\sigma_{\operatorname{in}}$ we will not use the same method as in \cite[Section~3.2]{Hyt:dyadic op.}. The sums  $\sigma_{\operatorname{=}}$ and $\sigma_{\operatorname{near}}$ will be treated in a similar but not exactly the same way as in \cite[Section~3.3]{Hyt:dyadic op.}.

In order to recognize these series as sums of good wavelet shifts, we need to find, for each pair $(I,J)$ appearing here, a common dyadic ancestor which contains $mI$ and $mJ$. The following lemma provides the existence of such containing cubes, with control on their size:

\begin{lemma}\label{lem:IveeJ}
If $I\in\mathscr{D}$ is good and $J\in\mathscr{D}$ is a cube with $\ell(J)\geq\ell(I)$, then there exists $K\supseteq mI\cup mJ$ which satisfies
\begin{equation*}
\begin{split}
  \ell(K)\Big(\frac{\ell(I)}{\ell(K)}\Big)^\theta&\lesssim D(I,J),\qquad\text{always,}\qquad\qquad\text{and}\\
  \ell(K)\lesssim\ell(I),\qquad\text{if}\qquad\dist(I,J)&\leq\ell(J)\Big(\frac{\ell(I)}{\ell(J)}\Big)^\theta\qquad\text{and}\qquad J\cap I=\varnothing.
\end{split}
\end{equation*}
\end{lemma}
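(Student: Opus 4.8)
The plan is to construct the cube $K$ as a dyadic ancestor of $I$ in the system $\mathscr{D}$, choosing the first (smallest) such ancestor that is large enough to contain $mJ$ as well, and then to use the goodness of $I$ to bound its size. First I would observe that for any integer $n\geq 0$ there is a unique $K^{(n)}\in\mathscr{D}$ with $K^{(n)}\supseteq I$ and $\ell(K^{(n)})=2^n\ell(I)$; I then let $K=K^{(n)}$ for the least $n$ such that $\ell(K^{(n)})\geq 2^r\ell(I)$ and $mJ\subseteq K^{(n)}$. Such an $n$ exists because, as $n\to\infty$, the nested cubes $K^{(n)}$ exhaust $\R^d$, while $mJ$ is bounded; and the extra requirement $\ell(K)\ge 2^r\ell(I)$ only pushes $n$ up by a bounded amount. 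One should note here that $mI\subseteq K$ as well: since $I$ is good and $\ell(K)\ge 2^r\ell(I)$, the badness condition \eqref{eq:bad and good cubes} fails, so $\dist(I,\partial K)>\Theta(\ell(I)/\ell(K))\ell(K)$; combined with $m$ being a fixed constant and $\Theta(t)=t^\theta$, for $r=r(d,\Theta)$ large enough this forces the $m$-fold dilate $mI$ to stay inside $K$ (this is exactly the ``automatic goodness'' mechanism already used in Lemma~\ref{lem: pointwise bound for A_K}). So $K\supseteq mI\cup mJ$ as required.

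Next I would prove the two size estimates. For the first (unconditional) bound: by minimality of $n$, either $n$ is the smallest index with $\ell(K^{(n)})\ge 2^r\ell(I)$ — in which case $\ell(K)\eqsim\ell(I)\le D(I,J)$ and, since $\Theta(t)\le 1$ for $t\le 1$, we get $\ell(K)\Theta(\ell(I)/\ell(K))\le\ell(K)\lesssim D(I,J)$ — or else the parent $\widehat{K}$ of $K$ (with $\ell(\widehat K)=\tfrac12\ell(K)$, still satisfying $\ell(\widehat K)\ge 2^r\ell(I)$) fails to contain $mJ$. In the latter case there is a point of $mJ$ outside $\widehat K$ while $I\subseteq\widehat K$, so $\dist(I, J)+m\,\ell(J)\gtrsim\operatorname{dist}(I,\partial\widehat K)$, and goodness of $I$ applied to $\widehat K$ gives $\operatorname{dist}(I,\partial\widehat K)>\Theta(\ell(I)/\ell(\widehat K))\ell(\widehat K)\eqsim\Theta(\ell(I)/\ell(K))\ell(K)$ (using $\Theta(t)=t^\theta$ to absorb the factor of $2$). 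Hence $\ell(K)\Theta(\ell(I)/\ell(K))\lesssim\dist(I,J)+\ell(J)\le D(I,J)$.

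For the second (conditional) bound, suppose $\dist(I,J)\le\ell(J)\Theta(\ell(I)/\ell(J))$ and $I\cap J=\varnothing$. Then $mJ$ is contained in a fixed dilate of $J$ whose sidelength is comparable to $\ell(J)$ and whose distance to $I$ is also small relative to $\ell(J)$; quantitatively $I$ and $mJ$ both lie in a cube of sidelength $\lesssim\ell(J)$. I would then show that already the ancestor $K^{(n_0)}$ of $I$ with $\ell(K^{(n_0)})\eqsim\ell(J)$ (choosing the implied constant large enough, and also $\ge 2^r\ell(I)$ — note $\ell(J)\ge\ell(I)$ so this costs nothing once $I$ is not much smaller than $J$; if it is much smaller, the bound $\ell(K)\lesssim\ell(I)$ we are proving is then used in reverse... ) — more carefully: I claim the minimal $K$ already satisfies $\ell(K)\lesssim\ell(J)$, and then I must improve $\ell(J)$ to $\ell(I)$. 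The improvement comes from goodness: if $\ell(K)\ge C\ell(I)$ for a large constant $C$ to be chosen, then $\dist(I,\partial K)>\Theta(\ell(I)/\ell(K))\ell(K)=\ell(K)^{1-\theta}\ell(I)^{\theta}$; but on the other hand $mJ\subseteq K$ forces $\dist(I,\partial K)\le\dist(I,mJ)+\operatorname{diam}(mJ)\lesssim\ell(J)\lesssim\ell(K)$, and more precisely $\le\dist(I,J)+m\ell(J)\lesssim\ell(J)$, while the parent not containing $mJ$ gives the matching lower bound on $\ell(K)$ in terms of $\ell(J)$ — so in fact $\ell(J)\gtrsim\ell(K)^{1-\theta}\ell(I)^\theta$, i.e. $(\ell(K)/\ell(I))^{1-\theta}\lesssim\ell(J)/\ell(I)$. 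Hmm, this still involves $\ell(J)$. The correct route, which I would follow, is: from $\dist(I,J)\le\ell(J)\Theta(\ell(I)/\ell(J))$ and $I\cap J=\varnothing$ one shows directly that the \emph{smallest} dyadic ancestor of $I$ containing $mJ$ has sidelength $\lesssim\ell(I)$, by contradiction with the goodness of $I$: if every ancestor of $I$ of sidelength between $C\ell(I)$ and $c\,\ell(J)$ failed to contain $mJ$, that would place $mJ$ far from $I$ relative to those scales, contradicting $\dist(I,J)$ being small; and goodness rules out $mJ$ straddling the boundary at those scales. I would write this out carefully using $\Theta(t)=t^\theta$ to make the interpolation of scales quantitative.

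\textbf{Main obstacle.} The delicate point is the conditional estimate $\ell(K)\lesssim\ell(I)$: one has to combine the hypothesis $\dist(I,J)\le\ell(J)\Theta(\ell(I)/\ell(J))$ — which controls the distance in terms of the \emph{large} scale $\ell(J)$ — with the goodness of $I$ at the \emph{small} scale, to conclude that no dyadic ancestor of $I$ of intermediate size can have $mJ$ neither inside nor straddling its boundary, hence the first ancestor containing $mI$ must already contain $mJ$ at scale $\eqsim\ell(I)$. Making the chain of scales and the power $\theta\in(0,1]$ interact correctly (in particular choosing $r$ large depending only on $d$ and $\Theta$ so that goodness really does force the $m$-dilates inside) is where the real work lies; the rest is bookkeeping with nested dyadic cubes and the monotonicity $\Theta(t)\le 1$ for $t\le 1$.
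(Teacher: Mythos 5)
Your construction of $K$ (the smallest dyadic ancestor of $I$ with $\ell(K)\geq 2^r\ell(I)$ that contains $mJ$), the deduction $mI\subseteq K$ from goodness, and the first, unconditional estimate are all correct and essentially the paper's argument: the paper instead takes the smallest ancestor with $\ell(K)\Theta(\ell(I)/\ell(K))\geq \dist(I,J)+m\ell(J)$ and checks that this forces $mJ\subseteq K$ via the triangle inequality $\dist(I,K^c)\leq\dist(I,J)+m\ell(J)+\dist(mJ,K^c)$ together with $\dist(I,K^c)>\ell(K)\Theta(\ell(I)/\ell(K))$, but your version (comparing $K$ with the next-smaller ancestor that fails to contain $mJ$) yields the same bound $\ell(K)\Theta(\ell(I)/\ell(K))\lesssim\dist(I,J)+\ell(J)\lesssim D(I,J)$. (Minor slip: the cube of sidelength $\tfrac12\ell(K)$ containing $I$ is a child of $K$, not its parent.)

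The second, conditional estimate is where your proposal has a genuine gap. The conclusion $\ell(K)\lesssim\ell(I)$ is simply false unless one first knows $\ell(J)\lesssim\ell(I)$ (if $\ell(J)\gg\ell(I)$, then $K\supseteq mJ$ alone forces $\ell(K)\geq m\ell(J)\gg\ell(I)$), and the only source of that bound is the goodness of $I$ tested against the cube $J$ \emph{itself}, not against ancestors of $I$: since $I\cap J=\varnothing$ one has $\dist(I,\partial J)=\dist(I,J)\leq\Theta(\ell(I)/\ell(J))\ell(J)$, so if $\ell(J)\geq 2^r\ell(I)$ then $J$ witnesses the badness of $I$ via \eqref{eq:bad and good cubes}; hence $\ell(J)<2^r\ell(I)$. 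Your sketched route — interpolating over intermediate ancestors $L$ of $I$ and arguing that $mJ$ can neither avoid nor straddle their boundaries — does not produce this: running your first-estimate argument at scale $\ell(L)\lesssim\ell(J)$ only gives $\ell(J)\gtrsim\ell(L)^{1-\theta}\ell(I)^{\theta}$, which is vacuous when $\ell(L)\lesssim\ell(J)$, so no contradiction arises and $\ell(J)$ never gets compared to $\ell(I)$. Once $\ell(J)<2^r\ell(I)$ is in hand, the rest is exactly as you suspect: $D(I,J)\leq\ell(I)+\ell(J)\Theta(\ell(I)/\ell(J))+\ell(J)\lesssim\ell(J)\lesssim\ell(I)$, so the first estimate gives $(\ell(K)/\ell(I))^{1-\theta}\lesssim 1$ and hence $\ell(K)\lesssim\ell(I)$ (using $\Theta(t)=t^{\theta}$ with $\theta<1$). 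So the missing idea is precisely the one-line application of goodness with $J$ as the large cube; without it the "main obstacle" you identify is not resolved.
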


\begin{proof}
Let us start with the following initial observation: if $I\in\mathscr{D}$ is good and $K\in\mathscr{D}$ satisfies $I\subseteq K$, and $\ell(K)\ge2^r\ell(I)$, then
\begin{equation*}
  \dist(I,K^c)=\dist(I,\partial K)>\ell(K)\Big(\frac{\ell(I)}{\ell(K)}\Big)^\theta=\ell(K)^{1-\theta}\ell(I)^{\theta}\ge2^{r(1-\theta)}\ell(I)>m\ell(I),
\end{equation*}
when $r$ is large enough. Hence $mI\subseteq K$, and we can proceed with the proof of $mJ\subseteq K$.
Using an elementary triangle inequality we estimate $\dist(I,K^c)$ in the following way:
\begin{equation*}
\begin{split}
  \dist(I,K^c)&\leq\dist(I,mJ)+\ell(mJ)+\dist(mJ,K^c)\\
  &\leq\dist(I,J)+m\ell(J)+\dist(mJ,K^c).
\end{split}
\end{equation*}
Thus, 
\begin{equation}\label{eq 1: mJ is good cube in K}
\begin{split}
  \dist(mJ,K^c)&\ge\dist(I,K^c)-\dist(I,J)-m\ell(J)\\
  &>\ell(K)\Big(\frac{\ell(I)}{\ell(K)}\Big)^\theta-\dist(I,J)-m\ell(J).
\end{split}
\end{equation}
In order to conclude that $mJ\subseteq K$ we want the right hand side of (\ref{eq 1: mJ is good cube in K}) to be non-negative. This is achieved by taking the smallest $\ell(K)$ such that
\begin{equation*}
  \ell(K)\Big(\frac{\ell(I)}{\ell(K)}\Big)^\theta\ge\dist(I,J)+m\ell(J).
\end{equation*}
Then, in fact 
\begin{equation}\label{eq 2: mJ is good cube in K}
  \ell(K)\Big(\frac{\ell(I)}{\ell(K)}\Big)^\theta\lesssim\dist(I,J)+m\ell(J)\lesssim\dist(I,J)+\ell(J).
\end{equation}
Hence,
\begin{equation*}
  \ell(K)\Big(\frac{\ell(I)}{\ell(K)}\Big)^\theta\lesssim D(I,J).
\end{equation*}
This proves the first estimate.

\subsubsection*{Case $\dist(I,J)\leq\ell(J)(\ell(I)/\ell(J))^\theta$ and $I\cap J=\varnothing$:}
As $I\cap J=\varnothing$, we have $\dist(I,J)=\dist(I,\partial J)$, and since $I$ is good, this implies $\ell(J)<2^r\ell(I)$. We can then dominate the right hand side of (\ref{eq 2: mJ is good cube in K}) by
\begin{equation}\label{eq 3: mJ is good cube in K}
  \ell(J)(\ell(I)/\ell(J))^\theta+\ell(J)\lesssim\ell(J)\lesssim\ell(I).
\end{equation}
Thus, from (\ref{eq 2: mJ is good cube in K}) and (\ref{eq 3: mJ is good cube in K}) we have
\begin{equation*}
  \frac{\ell(K)}{\ell(I)}\Big(\frac{\ell(I)}{\ell(K)}\Big)^\theta\lesssim1\quad\text{and}\quad\ell(K)\lesssim\ell(I),
\end{equation*}
so this proves the second estimate.
\end{proof}
We denote the minimal such $K$ by $I\vee J$, thus
\begin{equation*}
  I\vee J:=\bigcap_{K\supseteq mI\cup mJ} K.
\end{equation*}

\section{Estimates for the different sums $\sigma_{\operatorname{far}}, \sigma_{\operatorname{between}}, \sigma_{\operatorname{in}}, \sigma_{=}, \sigma_{\operatorname{near}}$}\label{estimating the different sums}

\subsection{Far away cubes, $\sigma_{\operatorname{far}}$}

We reorganize the sum $\sigma_{\operatorname{far}}$ with respect to the new summation variable $K=I\vee J$, as well as the relative size of $I$ and $J$ with respect to $K$:
\begin{equation*}
  \sigma_{\operatorname{far}}=\sum_{j=1}^{\infty}\sum_{i=j}^{\infty}\sum_K\sum_{\substack{I,J:\dist(I,J)>\ell(J)(\ell(I)/\ell(J))^\theta\\ \dist(mI,mJ)>\frac{1}{2}D(I,J)\\ I\vee J=K\\\ell(I)=2^{-i}\ell(K),\ell(J)=2^{-j}\ell(K)}}.
\end{equation*}
Note that we can start the summation from $1$ instead of $0$, since the disjointness of $I$ and $J$ implies that $K=I\vee J$ must be strictly larger than either of $I$ and $J$. The goal is to identify the quantity in parentheses as a decaying factor times an averaging operator with parameters $(i,j)$. The proof of the following lemma is similar to \cite[Lemma~3.8]{Hyt:dyadic op.} but to make use of the smoothness, we subtract a higher order Taylor expansion of the kernel $K$ instead of $y\mapsto K(x,y)$ at $y=c_I$.

\begin{lemma}\label{lem:far away cubes}
For $I$ and $J$ appearing in $\sigma_{\operatorname{far}}$, we have
\begin{equation*}
  \abs{\pair{\psi_J}{T\psi_I}}\lesssim\Norm{K}{CZ_s}\frac{\sqrt{\abs{I}\abs{J}}}{\abs{K}}\Big(\frac{\ell(I)}{\ell(K)}\Big)^{-\theta(d+s)}\Big(\frac{\ell(I)}{\ell(K)}\Big)^{s},
\end{equation*}
where $K=I\vee J$ and $\theta\in(0,1)$.
\end{lemma}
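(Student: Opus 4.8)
The plan is to combine the kernel representation of $T$, the vanishing moments of $\psi_I$, the higher-order kernel estimate \eqref{eq:high. ord. st. estim.}, and the size control on $I\vee J$ coming from Lemma~\ref{lem:IveeJ}. Since $I,J$ occur in $\sigma_{\operatorname{far}}$ we have $\dist(mI,mJ)>\tfrac12 D(I,J)>0$, so $\supp\psi_I\subseteq mI$ and $\supp\psi_J\subseteq mJ$ are disjoint and
\begin{equation*}
  \pair{\psi_J}{T\psi_I}=\int_{mJ}\psi_J(x)\Big(\int_{mI}K(x,y)\psi_I(y)\ud y\Big)\ud x .
\end{equation*}
Because $\psi_I$ has vanishing moments up to order $v=s-1$, for each fixed $x$ we may subtract from $K(x,y)$ its degree-$(s-1)$ Taylor polynomial $P_x(y)$ in the $y$ variable centred at $y=c_I$ without changing the inner integral; this is where we cash in the extra cancellation, replacing the constant (zeroth order) subtraction used in the Haar case.

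Next I would estimate the Taylor remainder. For $y\in mI$ the whole segment $[c_I,y]$ lies in the convex set $mI$, so the integral form of Taylor's theorem together with \eqref{eq:high. ord. st. estim.} applied to the derivatives $\partial^{\alpha}K(x,\cdot)$ with $\abs{\alpha}=s$, and $\abs{y-c_I}\lesssim\ell(I)$, gives, for $x\in mJ$,
\begin{equation*}
  \abs{K(x,y)-P_x(y)}\lesssim\ell(I)^s\sup_{\xi\in mI}\,\max_{\abs{\alpha}=s}\abs{\partial^{\alpha}K(x,\xi)}\lesssim\Norm{K}{CZ_s}\,\ell(I)^s\,\dist(mI,mJ)^{-d-s}\lesssim\Norm{K}{CZ_s}\,\ell(I)^s\,D(I,J)^{-d-s},
\end{equation*}
where I used $\abs{x-\xi}\ge\dist(mI,mJ)$ for $x\in mJ$, $\xi\in mI$, and then $\dist(mI,mJ)>\tfrac12 D(I,J)$. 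Since $\abs{\psi_I}\le C\abs{I}^{-1/2}1_{mI}$ and $\abs{\psi_J}\le C\abs{J}^{-1/2}1_{mJ}$, we have $\int\abs{\psi_I}\lesssim\abs{I}^{1/2}$ and $\int\abs{\psi_J}\lesssim\abs{J}^{1/2}$, so
\begin{equation*}
  \abs{\pair{\psi_J}{T\psi_I}}\lesssim\Norm{K}{CZ_s}\,\ell(I)^s\,D(I,J)^{-d-s}\sqrt{\abs{I}\abs{J}}.
\end{equation*}

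Finally, with $K=I\vee J$, the first estimate of Lemma~\ref{lem:IveeJ} gives $D(I,J)\gtrsim\ell(K)\Theta(\ell(I)/\ell(K))=\ell(K)^{1-\theta}\ell(I)^{\theta}$; substituting and using $\abs{K}=\ell(K)^d$ one sees that the powers of $\ell(I)$ and $\ell(K)$ match exactly, namely
\begin{equation*}
  \ell(I)^s D(I,J)^{-d-s}\lesssim\ell(I)^{s-\theta(d+s)}\ell(K)^{-(1-\theta)(d+s)}=\frac{1}{\abs{K}}\Big(\frac{\ell(I)}{\ell(K)}\Big)^{s}\Theta\Big(\frac{\ell(I)}{\ell(K)}\Big)^{-d-s},
\end{equation*}
which is the asserted bound. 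I do not expect a serious obstacle: the one point requiring care is that the intermediate point in the Taylor remainder must stay uniformly separated from $mJ$, which is precisely why the definition of $\sigma_{\operatorname{far}}$ imposes $\dist(mI,mJ)>\tfrac12 D(I,J)$ rather than merely that $\dist(I,J)$ be large; the other is the (purely arithmetic) matching of exponents in the last display, which is exactly the place where the cancellation order $v=s-1$ is needed.
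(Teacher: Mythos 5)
Your proposal is correct and follows essentially the same route as the paper's proof: subtract the degree-$(s-1)$ Taylor polynomial of $y\mapsto K(x,y)$ at $c_I$ (justified by the vanishing moments up to order $v=s-1$), bound the integral-form remainder by $\Norm{K}{CZ_s}\,\ell(I)^s\dist(mI,mJ)^{-d-s}$ using the convexity of $mI$, pass to $D(I,J)^{-d-s}$ via the defining condition $\dist(mI,mJ)>\tfrac12 D(I,J)$ of $\sigma_{\operatorname{far}}$, and then invoke Lemma~\ref{lem:IveeJ} together with $\Norm{\psi_I}{1}\lesssim\abs{I}^{1/2}$, $\Norm{\psi_J}{1}\lesssim\abs{J}^{1/2}$; the exponent bookkeeping at the end also matches the paper's.
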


\begin{proof}
Using the properties of $\psi_I$, Taylor series of order $s$ of $y\mapsto K(x,y)$ at the centre point $y=c_{I}$ of $I$, higher order standard estimate of the kernel (\ref{eq:high. ord. st. estim.}), and Lemma \ref{lem:IveeJ}
\begin{multline*}
  \abs{\pair{\psi_J}{T\psi_I}}=\Babs{\iint \psi_J(x)K(x,y)\psi_I(y)\ud y\ud x}\\
  =\Babs{\iint \psi_J(x)\Big(K(x,y)-\sum_{\mathclap{\substack{0\leq|\alpha|<s}}}\frac{(y-c_I)^{\alpha}}{\alpha!}\partial_2^{\alpha}K(x,c_I)\Big)\psi_I(y)\ud y\ud x}\\
  \leq\iint s\abs{\psi_{J}(x)}\Big(\int_{0}^{1}\sum_{\substack{|\alpha|=s}}\frac{\abs{y-c_I}^{\abs{\alpha}}}{\alpha!}\abs{\partial_2^{\alpha}K(x,ty+(1-t)c_I)}(1-t)^{s-1}\ud t\Big)\abs{\psi_{I}(y)}\ud y\ud x\\
  \lesssim\Norm{K}{CZ_s}\iint\abs{\psi_{J}(x)}\ell(I)^s\Big(\int_0^1\frac{s}{\abs{x-(c_I+t(y-c_I))}^{d+s}}(1-t)^{s-1}\ud t\Big)\abs{\psi_{I}(y)}\ud y\ud x\\
  \lesssim\Norm{K}{CZ_s}\frac{\ell(I)^s}{\dist(mI,mJ)^{d+s}}\Norm{\psi_J}{1}\Norm{\psi_I}{1}\\
  \lesssim\Norm{K}{CZ_s}\frac{\ell(I)^s}{D(I,J)^{d+s}}\Norm{\psi_J}{1}\Norm{\psi_I}{1}\\
  \lesssim\Norm{K}{CZ_s}\frac{\ell(I)^s}{\ell(K)^{d+s}}\Big(\frac{\ell(I)}{\ell(K)}\Big)^{-\theta(d+s)}\Norm{\psi_J}{1}\Norm{\psi_I}{1}\\
  \lesssim\Norm{K}{CZ_s}\frac{1}{\ell(K)^d}\Big(\frac{\ell(I)}{\ell(K)}\Big)^s\Big(\frac{\ell(I)}{\ell(K)}\Big)^{-\theta(d+s)}|mJ||mI||J|^{-\frac{1}{2}}|I|^{-\frac{1}{2}}\\
  \lesssim\Norm{K}{CZ_s}\frac{1}{\ell(K)^d}\Big(\frac{\ell(I)}{\ell(K)}\Big)^s\Big(\frac{\ell(I)}{\ell(K)}\Big)^{-\theta(d+s)}\sqrt{\abs{J}}\sqrt{\abs{I}}.
\end{multline*}
\end{proof}

\begin{lemma}
\begin{equation*}
\begin{split}
  \sum_{\substack{I,J:\dist(I,J)>\ell(J)(\ell(I)/\ell(J))^\theta\\ \dist(mI,mJ)>\frac{1}{2}D(I,J)\\I\vee J=K\\\ell(I)=2^{-i}\ell(K)\leq \ell(J)=2^{-j}\ell(K)}}
  &1_{\good}(I)\cdot  \pair{g}{\psi_{J}}\pair{\psi_{J}}{T\psi_{I}}\pair{\psi_{I}}{f}\\
  &=c\Norm{K}{CZ_s}2^{i\theta(d+s)}2^{-is}\pair{g}{A_K^{ij}f},
\end{split}
\end{equation*}
where $\theta\in(0,1)$ and $A_K^{ij}$ is an averaging operator with parameters $(i,j)$. 
\end{lemma}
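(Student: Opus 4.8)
The plan is to fix $K$, $i$, and $j$ and to recognize the displayed sum as $c\Norm{K}{CZ_s}\Theta(2^{-i})^{-d-s}2^{-is}$ times a bilinear form $\pair{g}{A_K^{ij}f}$ built out of suitably normalized matrix coefficients $\pair{\psi_J}{T\psi_I}$. The essential analytic ingredient, the decay estimate on $\pair{\psi_J}{T\psi_I}$, has already been established in Lemma~\ref{lem:far away cubes}; what is left is bookkeeping.

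First I would observe that every pair $(I,J)$ contributing to the sum satisfies $I,J\subseteq K=I\vee J$, since $I\vee J\supseteq mI\cup mJ\supseteq I\cup J$, together with $\ell(I)=2^{-i}\ell(K)$ and $\ell(J)=2^{-j}\ell(K)$, so that $\ell(I)/\ell(K)=2^{-i}$. Inserting this into Lemma~\ref{lem:far away cubes} gives, for a constant $c=c(d,s)$,
\[
  |\pair{\psi_J}{T\psi_I}|\le c\,\Norm{K}{CZ_s}\,\Theta(2^{-i})^{-d-s}\,2^{-is}\,\frac{\sqrt{|I||J|}}{|K|}.
\]

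Next I would define, for all cubes $I,J\in\mathscr{D}$ with $I,J\subseteq K$, $\ell(I)=2^{-i}\ell(K)$ and $\ell(J)=2^{-j}\ell(K)$,
\[
  a_{IJK}:=\frac{\pair{\psi_J}{T\psi_I}}{c\,\Norm{K}{CZ_s}\,\Theta(2^{-i})^{-d-s}\,2^{-is}}
\]
whenever $I$ is good, $\dist(I,J)>\ell(J)\Theta(\ell(I)/\ell(J))$, $\dist(mI,mJ)>\tfrac12 D(I,J)$ and $I\vee J=K$, and $a_{IJK}:=0$ otherwise; this absorbs both the indicator $1_{\good}(I)$ and the geometric constraints defining $\sigma_{\operatorname{far}}$. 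By the displayed inequality, $|a_{IJK}|\le\sqrt{|I||J|}/|K|$, i.e.\ (\ref{eq: bound for the coefficients}) holds, so
\[
  A_K^{ij}f:=\sum_{\substack{I,J\in\mathscr{D}:\,I,J\subseteq K\\ \ell(I)=2^{-i}\ell(K),\ \ell(J)=2^{-j}\ell(K)}}a_{IJK}\,\pair{f}{\psi_I}\,\psi_J
\]
is an averaging operator with parameters $(i,j)$; moreover $a_{IJK}\ne0$ forces $K=I\vee J\supseteq mI\cup mJ$, so $A_K^{ij}$ is the $K$-block of a \emph{good} wavelet shift.

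Finally I would substitute $\pair{\psi_J}{T\psi_I}=c\,\Norm{K}{CZ_s}\,\Theta(2^{-i})^{-d-s}\,2^{-is}\,a_{IJK}$ back into the left-hand side. Since $i$, $j$ and $K$ are fixed, the scalar prefactor is constant over the entire summation and pulls out, and what remains is $\sum_{I,J} a_{IJK}\pair{g}{\psi_J}\pair{f}{\psi_I}=\pair{g}{A_K^{ij}f}$, giving the asserted identity. The only point deserving care is precisely that the extracted factor $\Theta(2^{-i})^{-d-s}2^{-is}$ is genuinely independent of the summation variables $I,J$, which is why the sum had to be reorganized according to the relative sizes $i,j$ and the ancestor $K=I\vee J$ beforehand; beyond this the argument is purely formal once Lemma~\ref{lem:far away cubes} is available, and I do not foresee any serious obstacle.
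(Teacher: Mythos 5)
Your argument is correct and is essentially the paper's own proof: apply Lemma \ref{lem:far away cubes} with $\ell(I)/\ell(K)=2^{-i}$, divide out the scalar factor $c\Norm{K}{CZ_s}\Theta(2^{-i})^{-d-s}2^{-is}$ (which is constant once $i$, $j$, $K$ are fixed), and observe that the resulting coefficients satisfy the bound (\ref{eq: bound for the coefficients}) defining an averaging operator. Your additional remarks --- absorbing the indicator $1_{\good}(I)$ and the geometric constraints into the definition of $a_{IJK}$, and noting that $K=I\vee J\supseteq mI\cup mJ$ makes the shift good --- merely make explicit what the paper leaves implicit.
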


\begin{proof}
By Lemma \ref{lem:far away cubes}, substituting $\ell(I)/\ell(K)=2^{-i}$,
\begin{equation*}
  \abs{\pair{\psi_J}{T\psi_I}}\lesssim\Norm{K}{CZ_s}\frac{\sqrt{\abs{I}\abs{J}}}{\abs{K}}2^{i\theta(d+s)}2^{-is},
\end{equation*}
and the first factor is precisely the required size of the coefficients of $A_K^{ij}$.
\end{proof}

Summarizing, we have
\begin{equation*}
  \sigma_{\operatorname{far}}=c\Norm{K}{CZ_s}\sum_{j=1}^{\infty}\sum_{i=j}^{\infty}2^{-i(s-\epsilon)}\pair{g}{S^{ij}f},
\end{equation*}
where we choose $\theta=\frac{\epsilon}{d+s}$ for any given $\epsilon>0$ and $S^{ij}$ is a good wavelet shift with parameters $(i,j)$. 

\subsection{Intermediate cubes, $\sigma_{\operatorname{between}}$}

Let $M>m$. In this part, we make use of the fact that $\psi_J$ has a Taylor series of order $s$ at the centre point $c_I$ of $I$ and we denote
\begin{equation*}
  \text{Tayl}_{s}(\psi_J,c_I):=\sum_{\substack{0\leq|\alpha|<s}}\frac{(x-c_I)^{\alpha}}{\alpha!}\partial^{\alpha}\psi_J(c_I). 
\end{equation*}
We drop $c_I$, when it is clear from the context. Thus, we have
\begin{equation}\label{eq1:sigma between and sigma in}
  \pair{\psi_J}{T\psi_I}=\pair{\psi_J-\text{Tayl}_{s}(\psi_J,c_I)}{T\psi_I}+\pair{\text{Tayl}_{s}(\psi_J,c_I)}{T\psi_I}.
\end{equation}

Observe that due to the hypothesis of Theorem \ref{thm:formula} that the operators $T,T^*$ map $\mathcal{P}_s$ to itself, and by the cancellation of $\psi_I$ the last term of (\ref{eq1:sigma between and sigma in}) vanishes. The first term of (\ref{eq1:sigma between and sigma in}) we can further split as
\begin{equation}\label{eq2:sigma between and sigma in}
\begin{split}
  \pair{\psi_J}{T\psi_I} &=\pair{1_{(MI)^c}(\psi_J-\text{Tayl}_{s}(\psi_J,c_I))}{T\psi_I} \\
  &\qquad+\pair{1_{MI}(\psi_J-\text{Tayl}_{s}(\psi_J,c_I))}{T\psi_I}.
\end{split}
\end{equation}

In the following we estimate the remaining non-vanishing terms of (\ref{eq2:sigma between and sigma in}). For these terms, we obtain estimates that do not depend on the fact that we are dealing with the intermediate cubes, and in fact we will use these same estimates again to deal with $\sigma_{\operatorname{in}}$.
\begin{lemma}\label{eq:first term}
For all $I,J\in\mathscr{D}$ such that $\ell(I)\leq\ell(J)$, we have
\begin{equation*}
  \abs{\pair{1_{(MI)^c}(\psi_J-\text{Tayl}_{s}(\psi_J,c_I))}{T\psi_I}}\lesssim\Norm{K}{CZ_s}\Big(\frac{\abs{I}}{\abs{J}}\Big)^{1/2}\Psi\Big(\frac{\ell(I)}{\ell(J)}\Big),
\end{equation*}
where
\begin{equation*}
  \Psi(t):=t^s\Big(\log\frac{1}{t}+1\Big)\lesssim t^{s-\epsilon}\quad\text{for any given}\quad\epsilon>0\quad\text{and}\quad t\in(0,1].
\end{equation*}
\end{lemma}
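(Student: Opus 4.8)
The plan is to exploit the pointwise bounds on $T\psi_I$ that follow from the higher order standard estimate on the region $x\notin MI$, together with the Taylor-remainder bound on $\psi_J-\mathrm{Tayl}_s(\psi_J,c_I)$, and then integrate. First I would record the pointwise estimate for $T\psi_I(x)$ when $x\notin MI$: by the cancellation of $\psi_I$ up to order $v=s-1$ we may subtract the order-$s$ Taylor expansion of $y\mapsto K(x,y)$ at $y=c_I$, and then the higher order standard estimate \eqref{eq:high. ord. st. estim.} together with the regularity of $\psi_I$ gives
\begin{equation*}
  \abs{T\psi_I(x)}\lesssim\Norm{K}{CZ_s}\abs{I}^{1/2}\frac{\ell(I)^s}{\abs{x-c_I}^{d+s}},\qquad x\notin MI,
\end{equation*}
in complete analogy with the kernel computation already carried out in the proof of Lemma \ref{lem:far away cubes}. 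Next I would bound the Taylor remainder of $\psi_J$: since $\psi_J$ satisfies the regularity estimate for $u=s$, the integral form of the remainder gives, for any $x$,
\begin{equation*}
  \abs{\psi_J(x)-\mathrm{Tayl}_s(\psi_J,c_I)(x)}\lesssim\abs{J}^{-1/2}\ell(J)^{-s}\abs{x-c_I}^{s}
\end{equation*}
on the support of the relevant wavelet, and of course $\psi_J-\mathrm{Tayl}_s(\psi_J,c_I)$ is supported (for the purposes of the remainder estimate on $mJ$) where the Taylor point and the argument are controlled; away from $mJ$ one has $\psi_J=0$, so there the difference is just the polynomial $-\mathrm{Tayl}_s(\psi_J,c_I)$, which is $\lesssim\abs{J}^{-1/2}\sum_{k<s}\ell(J)^{-k}\abs{x-c_I}^k$ — but the dangerous growth at infinity is killed because we are pairing against $1_{(MI)^c}T\psi_I$, which decays like $\abs{x-c_I}^{-d-s}$.

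The core of the argument is then the integral
\begin{equation*}
  \int_{(MI)^c}\abs{\psi_J(x)-\mathrm{Tayl}_s(\psi_J,c_I)(x)}\,\abs{T\psi_I(x)}\ud x.
\end{equation*}
I would split this into the region $MI\subset \abs{x-c_I}\lesssim\ell(J)$ — where I use the genuine Taylor-remainder bound $\abs{J}^{-1/2}\ell(J)^{-s}\abs{x-c_I}^s$ against $\abs{I}^{1/2}\ell(I)^s\abs{x-c_I}^{-d-s}$, producing an integrand $\sim (\abs{I}/\abs{J})^{1/2}(\ell(I)/\ell(J))^s\abs{x-c_I}^{-d}$ whose integral over $M\ell(I)\le\abs{x-c_I}\lesssim\ell(J)$ contributes the logarithm $\log(\ell(J)/\ell(I))\sim\log(1/t)$ with $t=\ell(I)/\ell(J)$ — and the region $\abs{x-c_I}\gtrsim\ell(J)$, where $\psi_J$ vanishes so that only the polynomial $\mathrm{Tayl}_s(\psi_J,c_I)$ of degree $<s$ survives; there each term $\abs{J}^{-1/2}\ell(J)^{-k}\abs{x-c_I}^k$ against $\abs{I}^{1/2}\ell(I)^s\abs{x-c_I}^{-d-s}$ is integrable at infinity (since $k<s$) and evaluates at the lower endpoint $\abs{x-c_I}\sim\ell(J)$ to give $\lesssim(\abs{I}/\abs{J})^{1/2}(\ell(I)/\ell(J))^s$ — the same order without the log. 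Collecting the two regions yields $\lesssim\Norm{K}{CZ_s}(\abs{I}/\abs{J})^{1/2}t^s(\log(1/t)+1)=\Norm{K}{CZ_s}(\abs{I}/\abs{J})^{1/2}\Psi(t)$, which is the claim; the elementary estimate $\Psi(t)=t^s(\log(1/t)+1)\lesssim_\epsilon t^{s-\epsilon}$ on $(0,1]$ is immediate.

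The main obstacle I anticipate is bookkeeping at the boundary $\abs{x-c_I}\sim\ell(J)$, i.e.\ making sure that the polynomial-tail region and the Taylor-remainder region are patched consistently and that the constant $M$ (and the support parameter $m$ of the wavelets) are chosen so that $MI$ genuinely separates the singularity of the kernel from $\supp\psi_I$ while still $MI$ being comparable to $\ell(I)$; one must also be slightly careful that $\mathrm{Tayl}_s(\psi_J,c_I)$ is compared against $\psi_J$ on the correct set (the remainder formula is valid pointwise, but the cheap bound $\abs{\psi_J}\le$ regularity estimate only helps inside $mJ$). None of this is deep, but it is where the argument could go wrong if done carelessly; everything else is a direct repetition of the Calderón–Zygmund estimates already used for $\sigma_{\operatorname{far}}$.
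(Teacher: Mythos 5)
Your proposal is correct and follows essentially the same route as the paper: subtract the order-$s$ Taylor polynomial of $y\mapsto K(x,y)$ at $c_I$ using the cancellation of $\psi_I$ to get the pointwise decay $\lesssim\Norm{K}{CZ_s}\abs{I}^{1/2}\ell(I)^s\abs{x-c_I}^{-d-s}$ of $T\psi_I$ off $MI$, then bound $\psi_J-\mathrm{Tayl}_s(\psi_J,c_I)$ by the genuine Taylor remainder for $\abs{x-c_I}\lesssim\ell(J)$ (yielding the logarithm) and by the degree-$(s-1)$ polynomial growth for $\abs{x-c_I}\gtrsim\ell(J)$ (yielding the constant term), exactly as in the paper's Estimates 1 and 2. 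The only slip is your claim that $\psi_J$ vanishes where $\abs{x-c_I}\gtrsim\ell(J)$ — false when $J$ is far from $I$ — but it is harmless since $\Norm{\psi_J}{\infty}\lesssim\abs{J}^{-1/2}$ is dominated there by the polynomial terms you already keep, which is precisely how the paper handles it.
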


\begin{proof}
Let us denote $\text{Tayl}_{s}(K):=\sum_{\substack{0\leq|\alpha|<s}}\frac{(y-c_I)^{\alpha}}{\alpha!}\partial^{\alpha}_2K(x,c_I)$. Using the cancellation of $\psi_I$, the Taylor series of order $s$ of $y\mapsto K(x,y)$ at the centre point $y=c_{I}$ of $I$ and the higher order standard estimate of the kernel (\ref{eq:high. ord. st. estim.})
\begin{multline}\label{eq:Taylor 1}
  \abs{\pair{1_{(MI)^c}(\psi_J-\text{Tayl}_{s}(\psi_J,c_I))}{T\psi_I}}\\
  \leq\iint1_{(MI)^{c}}(x)\abs{\psi_J(x)-\text{Tayl}_{s}(\psi_J,c_I)}\abs{K(x,y)-\text{Tayl}_{s}(K)}\abs{\psi_{I}(y)}\ud y\ud x\\
  \lesssim\Norm{K}{CZ_s}\ell(I)^s\|\psi_I\|_1\int1_{(MI)^c}(x)\frac{\abs{\psi_J(x)-\text{Tayl}_{s}(\psi_J,c_I)}}{\dist(x,mI)^{d+s}}\ud x.
\end{multline}

Now, using the regularity property and the Taylor series of order $s$ of $\psi_J$ at the centre point $c_I$ of $I$ we derive the following two estimates:
\subsubsection*{Estimate 1}

\begin{equation*}
\begin{split}
  \abs{\psi_J(x)-\text{Tayl}_{s}(\psi_J,c_I)}&\leq\|\psi_J\|_{\infty}+\sum_{\mathclap{\substack{0\leq|\alpha|<s}}}\frac{\abs{x-c_I}^{\abs{\alpha}}}{\alpha!}\|\partial^{\alpha}\psi_J\|_{\infty}\\
  &\lesssim\abs{J}^{-1/2}\Big(\sum_{\substack{a<s}}\frac{\dist(x,mI)^{a}}{\ell(J)^a}\Big),
\end{split}
\end{equation*}
where $a=\abs{\alpha}\in\N$.

\subsubsection*{Estimate 2}

\begin{equation*}
\begin{split}
  \abs{\psi_J(x)-\text{Tayl}_{s}(\psi_J,c_I)}&\leq s\int_{0}^{1}\sum_{\mathclap{\substack{|\alpha|=s}}}\frac{\abs{x-c_I}^{\abs{\alpha}}}{\alpha!}\abs{\partial^{\alpha}\psi_{J}(tx+(1-t)c_{I})}(1-t)^{s-1}\ud t\\
  &\lesssim \abs{x-c_I}^s\|\partial^{s}\psi_J\|_{\infty}\\
  &\lesssim\abs{J}^{-1/2}\frac{{\dist(x,mI)}^s}{\ell(J)^s}.
\end{split}
\end{equation*}
Thus, the right hand side of (\ref{eq:Taylor 1}) is dominated by

\begin{equation*}
\begin{split}
  &\lesssim\Norm{K}{CZ_s}\ell(I)^s\frac{\|\psi_I\|_1}{\abs{J}^{1/2}}\Big(\int_{\substack{(MI)^c\\ \dist(x,mI)\leq\ell(J)}}\frac{\dist(x,mI)^s}{\ell(J)^s}\frac{1}{\dist(x,mI)^{d+s}}\ud x\\
  &\qquad+\int_{\dist(x,mI)>\ell(J)}\frac{\dist(x,mI)^{s-1}}{\ell(J)^{s-1}}\frac{1}{\dist(x,mI)^{d+s}}\ud x\Big)\\
  &\lesssim\Norm{K}{CZ_s}\ell(I)^s\Big(\frac{\abs{I}}{\abs{J}}\Big)^{1/2}\Big(\int_{\ell(I)}^{\ell(J)}\frac{1}{\ell(J)^s}\frac{1}{t}\ud t
  +\int_{\ell(J)}^{\infty}\frac{1}{\ell(J)^{s-1}}\frac{1}{t^2}\ud t\Big)\\
  &=\Norm{K}{CZ_s}\ell(I)^s\Big(\frac{\abs{I}}{\abs{J}}\Big)^{1/2}\Big(\frac{1}{\ell(J)^s}\log\frac{\ell(J)}{\ell(I)}+\frac{1}{\ell(J)^{s-1}}\frac{1}{\ell(J)}\Big)\\
  &=\Norm{K}{CZ_s}\Big(\frac{\abs{I}}{\abs{J}}\Big)^{1/2}\Psi\Big(\frac{\ell(I)}{\ell(J)}\Big).
\end{split}
\end{equation*}
\end{proof}

\begin{lemma}\label{eq:second term}
For all $I,J\in\mathscr{D}$ such that $\ell(I)\leq\ell(J)$, we have
\begin{equation}\label{eq:Taylor 2}
  \abs{\pair{1_{MI}(\psi_J-\text{Tayl}_{s}(\psi_J,c_I))}{T\psi_I}}\lesssim\Norm{T}{L^2\to L^2}\Big(\frac{\ell(I)}{\ell(J)}\Big)^s\Big(\frac{\abs{I}}{\abs{J}}\Big)^{1/2}.
\end{equation}
\end{lemma}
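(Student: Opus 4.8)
The plan is to estimate the pairing $\pair{1_{MI}(\psi_J - \text{Tayl}_s(\psi_J,c_I))}{T\psi_I}$ by pure \emph{size considerations} using the $L^2\to L^2$ boundedness of $T$, exploiting that the region of integration $MI$ is comparable in scale to $I$ so no kernel smoothness is available (or needed) there. First I would apply Cauchy--Schwarz:
\begin{equation*}
  \abs{\pair{1_{MI}(\psi_J-\text{Tayl}_{s}(\psi_J,c_I))}{T\psi_I}}\leq\Norm{1_{MI}(\psi_J-\text{Tayl}_{s}(\psi_J,c_I))}{L^2}\Norm{T\psi_I}{L^2}\leq\Norm{T}{L^2\to L^2}\Norm{1_{MI}(\psi_J-\text{Tayl}_{s}(\psi_J,c_I))}{L^2}\Norm{\psi_I}{L^2},
\end{equation*}
and recall $\Norm{\psi_I}{L^2}=1$ by orthonormality. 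So the whole statement reduces to the bound
\begin{equation*}
  \Norm{1_{MI}(\psi_J-\text{Tayl}_{s}(\psi_J,c_I))}{L^2}\lesssim\Big(\frac{\ell(I)}{\ell(J)}\Big)^s\Big(\frac{\abs{I}}{\abs{J}}\Big)^{1/2}.
\end{equation*}

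To prove this, I would use the degree-$s$ Taylor remainder estimate for $\psi_J$ about $c_I$, exactly as in ``Estimate 2'' of the previous lemma: for $x\in MI$ one has $\abs{x-c_I}\lesssim\ell(I)$ (since $MI$ has sidelength $M\ell(I)$ and $c_I$ is its centre), and hence by the regularity property (item (iii) with $u=s$),
\begin{equation*}
  \abs{\psi_J(x)-\text{Tayl}_{s}(\psi_J,c_I)}\lesssim\abs{x-c_I}^s\Norm{\partial^s\psi_J}{\infty}\lesssim\ell(I)^s\cdot\ell(J)^{-s}\abs{J}^{-1/2}=\Big(\frac{\ell(I)}{\ell(J)}\Big)^s\abs{J}^{-1/2}.
\end{equation*}
Integrating the square of this pointwise bound over $MI$, whose measure is $M^d\abs{I}\lesssim\abs{I}$ for fixed $M$, gives
\begin{equation*}
  \Norm{1_{MI}(\psi_J-\text{Tayl}_{s}(\psi_J,c_I))}{L^2}^2\lesssim\Big(\frac{\ell(I)}{\ell(J)}\Big)^{2s}\frac{\abs{MI}}{\abs{J}}\lesssim\Big(\frac{\ell(I)}{\ell(J)}\Big)^{2s}\frac{\abs{I}}{\abs{J}},
\end{equation*}
and taking square roots yields the claim, which combined with Cauchy--Schwarz gives the lemma.

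I do not expect a genuine obstacle here; the estimate is the ``easy'' local piece where one simply discards $T$ via its operator norm. The only points requiring minor care are: (a) noting that the Taylor subtraction is what supplies the gain $(\ell(I)/\ell(J))^s$ — without it one would only get the trivial bound $\Norm{\psi_J}{\infty}\abs{MI}^{1/2}\lesssim\abs{J}^{-1/2}\abs{I}^{1/2}$, losing the decay in $\ell(I)/\ell(J)$ needed to sum the series; (b) using that $\Norm{\partial^s\psi_J}{\infty}\lesssim\ell(J)^{-s}\abs{J}^{-1/2}$ requires the regularity parameter $u=s$, which is exactly the hypothesis on the wavelet in Theorem \ref{thm:formula}; and (c) the implicit constant depends on $M$ (and $d,s$), which is harmless since $M$ is a fixed parameter chosen once and for all. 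As remarked before Lemma \ref{eq:first term}, this bound together with Lemma \ref{eq:first term} handles both the non-vanishing terms of \eqref{eq2:sigma between and sigma in}, and the same two lemmas will be reused for $\sigma_{\operatorname{in}}$.
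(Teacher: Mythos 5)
Your argument is correct and is essentially the paper's own proof: Cauchy--Schwarz to discard $T$ via $\Norm{T}{L^2\to L^2}\Norm{\psi_I}{2}$, then the order-$s$ Taylor remainder bound $\abs{\psi_J(x)-\text{Tayl}_{s}(\psi_J,c_I)}\lesssim (M\ell(I))^s\Norm{\partial^{s}\psi_J}{\infty}\lesssim \ell(I)^s\ell(J)^{-s}\abs{J}^{-1/2}$ on $MI$, using the regularity property with $u=s$ and $\abs{MI}\lesssim\abs{I}$. The paper merely writes the remainder in its integral form before estimating; the content and all the ingredients are identical.
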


\begin{proof}
By the Taylor series of order $s$ of $\psi_J$ at the centre point $c_I$ of $I$ and the regularity properties of $\psi_I,\psi_J$, we can compute the left hand side of (\ref{eq:Taylor 2}) as follows:
\begin{multline*}
  \Babs{\Bpair{1_{MI}\int_0^1\sum_{\mathclap{\substack{|\alpha|=s}}}\frac{{(x-c_I)}^{\alpha}}{\alpha!}\partial^{\alpha}\psi_J(tx+(1-t)c_I)(1-t)^{s-1}\ud t}{T\psi_I}}\\
  \lesssim\Norm{T}{L^2\to L^2}(M\ell(I))^s\|\partial^{s}\psi_J\|_{\infty}\abs{I}^{1/2}\\
  \lesssim\Norm{T}{L^2\to L^2}\ell(I)^s\ell(J)^{-s}\abs{J}^{-1/2}\abs{I}^{1/2}\\
  =\Norm{T}{L^2\to L^2}\Big(\frac{\ell(I)}{\ell(J)}\Big)^s\Big(\frac{\abs{I}}{\abs{J}}\Big)^{1/2}.
\end{multline*}
\end{proof}
By combining equation (\ref{eq2:sigma between and sigma in}), Lemmata \ref{eq:first term} and \ref{eq:second term} we have 
\begin{equation}\label{eq3:sigma between and sigma in}
  \abs{\pair{\psi_J}{T\psi_I}}\lesssim(\Norm{K}{CZ_s}+\Norm{T}{L^2\to L^2})\Big(\frac{|I|}{|J|}\Big)^{1/2}\Big(\frac{\ell(I)}{\ell(J)}\Big)^{s-\epsilon}.
\end{equation}

Now, for the completion of the analysis of the sum $\sigma_{\operatorname{between}}$ we will need the following lemma:

\begin{lemma}\label{lem:an estimate for D(I,J)}
For $I$ and $J$ appearing in $\sigma_{\operatorname{between}}$, we have
\begin{equation*}
  D(I,J)\lesssim\ell(J),  
\end{equation*}
where $D(I,J)$ is the long distance introduced in Section \ref{sec:representation}.
\begin{proof}
We start by estimating $\dist(I,J)$ as follows:
\begin{equation}\label{eq:estimate for dist(I,J)}
\begin{split}
  \dist(I,J)&\leq\frac{1}{2}(m-1)\ell(I)+\dist(mI,mJ)+\frac{1}{2}(m-1)\ell(J)\\
  &\leq\frac{1}{2}(m-1)\ell(I)+\frac{1}{2}D(I,J)+\frac{1}{2}(m-1)\ell(J)\\
  &=\frac{1}{2}(m-1)\ell(I)+\frac{1}{2}\ell(I)+\frac{1}{2}\dist(I,J)+\frac{1}{2}\ell(J)+\frac{1}{2}(m-1)\ell(J)\\
  &=\frac{m}{2}\ell(I)+\frac{1}{2}\dist(I,J)+\frac{m}{2}\ell(J).
\end{split}
\end{equation}
Hence, (\ref{eq:estimate for dist(I,J)}) implies 
\begin{equation}\label{eq:final estimate for dist(I,J)}
  \dist(I,J)\leq m\ell(I)+m\ell(J).
\end{equation}
Applying (\ref{eq:final estimate for dist(I,J)}) we have
\begin{equation*}
  \frac{D(I,J)}{\ell(J)}=\frac{\ell(I)+\dist(I,J)+\ell(J)}{\ell(J)}\leq(m+1)\frac{\ell(I)+\ell(J)}{\ell(J)}\leq2(m+1).
\end{equation*}
\end{proof}
\end{lemma}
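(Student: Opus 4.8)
The plan is to exploit the single condition that distinguishes the pairs $(I,J)$ occurring in $\sigma_{\operatorname{between}}$ from a generic pair with $\ell(I)\le\ell(J)$, namely $\dist(mI,mJ)\le\tfrac12 D(I,J)$, and to play it against an elementary comparison of $\dist(I,J)$ with $\dist(mI,mJ)$. Since $mI$ is concentric with $I$ and has sidelength $m\ell(I)$, in the $\ell^\infty$ norm every point of $mI$ lies within distance $\tfrac12(m-1)\ell(I)$ of $I$, and likewise for $mJ$ and $J$; the triangle inequality therefore gives
\[
  \dist(I,J)\le\tfrac12(m-1)\ell(I)+\dist(mI,mJ)+\tfrac12(m-1)\ell(J).
\]

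Next I would substitute the defining inequality of $\sigma_{\operatorname{between}}$ and unfold $D(I,J)=\ell(I)+\dist(I,J)+\ell(J)$ on the right-hand side. This produces a self-referential estimate for $\dist(I,J)$ in which $\dist(I,J)$ reappears with coefficient $\tfrac12$; after collecting the $\ell(I)$- and $\ell(J)$-terms one is left with $\dist(I,J)\le\tfrac m2\ell(I)+\tfrac12\dist(I,J)+\tfrac m2\ell(J)$.

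The last step is a routine absorption: subtracting $\tfrac12\dist(I,J)$ gives $\dist(I,J)\le m\bigl(\ell(I)+\ell(J)\bigr)$, whence
\[
  D(I,J)=\ell(I)+\dist(I,J)+\ell(J)\le(m+1)\bigl(\ell(I)+\ell(J)\bigr)\le2(m+1)\,\ell(J),
\]
the final inequality using $\ell(I)\le\ell(J)$. This yields $D(I,J)\lesssim\ell(J)$ with implied constant $2(m+1)$, depending only on $m$.

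I do not expect any real obstacle: the argument is purely geometric, and the only points requiring care are the passage from $I$ to the concentric dilate $mI$ in the $\ell^\infty$ norm (the overhang on each side is exactly $\tfrac12(m-1)\ell(I)$) and remembering to use the size ordering $\ell(I)\le\ell(J)$ at the end. The estimate is precisely what is needed to recognise $\sigma_{\operatorname{between}}$ as a sum of good wavelet shifts, since in combination with Lemma \ref{lem:IveeJ} — whose conclusion $\ell(K)\Theta(\ell(I)/\ell(K))\lesssim D(I,J)$ holds for $K=I\vee J$ — it controls the relevant ancestor $K$ in terms of $\ell(J)$.
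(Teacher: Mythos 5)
Your proposal is correct and follows exactly the paper's argument: the same triangle inequality through the concentric dilates $mI$, $mJ$, the same substitution of the $\sigma_{\operatorname{between}}$ condition $\dist(mI,mJ)\le\tfrac12 D(I,J)$ leading to the self-referential bound, and the same absorption yielding $D(I,J)\le 2(m+1)\ell(J)$. Nothing to add.
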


Using Lemma \ref{lem:IveeJ} we can organize the sum $\sigma_{\operatorname{between}}$ in a similar way as the sum $\sigma_{\operatorname{far}}$ 
\begin{equation*}
  \sigma_{\operatorname{between}}=\sum_{j=1}^{\infty}\sum_{i=j}^{\infty}\sum_K\sum_{\substack{I,J:\ell(I)\leq\ell(J)\\\dist(I,J)>\ell(J)(\ell(I)/\ell(J))^\theta\\ \dist(mI,mJ)\leq\frac{1}{2}D(I,J)\\ I\vee J=K\\\ell(I)=2^{-i}\ell(K),\ell(J)=2^{-j}\ell(K)}}a(I,J),
\end{equation*}
where
\begin{equation}\label{eq.1: between cubes}
  a(I,J):=\pair{g}{\psi_J}\pair{\psi_J}{T\psi_I}\pair{\psi_I}{f}
\end{equation}
satisfies, by (\ref{eq3:sigma between and sigma in}), the estimate
\begin{equation}\label{eq.2: between cubes}
\begin{split}
  \abs{a(I,J)}&\lesssim\abs{\pair{g}{\psi_J}}(\Norm{K}{CZ_s}+\Norm{T}{L^2\to L^2})\Big(\frac{|I|}{|J|}\Big)^{1/2}\Big(\frac{\ell(I)}{\ell(J)}\Big)^{s-\epsilon}\abs{\pair{\psi_I}{f}}\\
  &=\abs{\pair{g}{\psi_J}}(\Norm{K}{CZ_s}+\Norm{T}{L^2\to L^2})\frac{\sqrt{|I||J|}}{|K|}\frac{|K|}{|J|}\Big(\frac{\ell(I)}{\ell(J)}\Big)^{s-\epsilon}\abs{\pair{\psi_I}{f}}.
\end{split}
\end{equation}

By combining Lemmata \ref{lem:IveeJ} and \ref{lem:an estimate for D(I,J)} we can estimate $|K|/|J|\Big(\ell(I)/\ell(J)\Big)^{s-\epsilon}$ of (\ref{eq.1: between cubes}) as follows:
\begin{equation}\label{eq.3: between cubes}
\begin{split}
  \frac{|K|}{|J|}\Big(\frac{\ell(I)}{\ell(J)}\Big)^{s-\epsilon}&=\frac{\ell(K)^d}{\ell(J)^d}\Big(\frac{\ell(I)}{\ell(J)}\Big)^{s-\epsilon}\\
  &\lesssim\frac{\ell(K)^d}{\Big({\ell(K)\Big(\frac{\ell(I)}{\ell(K)}\Big)^\theta}\Big)^{d+s-\epsilon}}\ell(I)^{s-\epsilon}\\
  &=\frac{\ell(K)^d}{\ell(K)^{(1-\theta)(d+s-\epsilon)}}\frac{\ell(I)^{s-\epsilon}}{\ell(I)^{\theta(d+s-\epsilon)}}\\
  &=\Big(\frac{\ell(I)}{\ell(K)}\Big)^{s-\epsilon-\theta(d+s-\epsilon)}\\
  &\le\Big(\frac{\ell(I)}{\ell(K)}\Big)^{s-\epsilon-\theta(d+s)}\\
  &=\Big(\frac{\ell(I)}{\ell(K)}\Big)^{s-2\epsilon},
\end{split}
\end{equation}
where we choose $\theta=\frac{\epsilon}{d+s}$ for any given $\epsilon>0$. Summarizing, from (\ref{eq.2: between cubes}) and (\ref{eq.3: between cubes}) we have
\begin{equation*}
\begin{split}
  \sigma_{\operatorname{between}}&=c\sum_{j=1}^{\infty}\sum_{i=j}^{\infty}\sum_K(\Norm{K}{CZ_s}+\Norm{T}{L^2\to L^2})2^{-i(s-2\epsilon)}\pair{g}{A_K^{ij} f}\\
  &=c(\Norm{K}{CZ_s}+\Norm{T}{L^2\to L^2})\sum_{j=1}^{\infty}\sum_{i=j}^{\infty}2^{-i(s-2\epsilon)}\pair{g}{S^{ij} f},
\end{split}
\end{equation*}
where $A^{ij}_K$ is an averaging operator and  $S^{ij}$ is a good wavelet shift with parameters $(i,j)$.

\subsection{Contained cubes, $\sigma_{\operatorname{in}}$}

Let $M>m$. When $I\subsetneq J$, the argument is the same as in the case of the sum $\sigma_{\operatorname{between}}$ but further apart from the corresponding estimate in \cite[Section~3.2]{Hyt:dyadic op.}. Hence, by combining equations (\ref{eq1:sigma between and sigma in}) and (\ref{eq2:sigma between and sigma in}), Lemmata \ref{eq:first term} and \ref{eq:second term}, estimate (\ref{eq3:sigma between and sigma in}) we can organize
\begin{equation*}
  \sigma_{\operatorname{in}}=\sum_{j=1}^{\infty}\sum_{i=j}^{\infty}\sum_K\sum_{\substack{I,J:I\subsetneq J\\I\vee J=K\\\ell(I)=2^{-i}\ell(K),\ell(J)=2^{-j}\ell(K)}}a(I,J),
\end{equation*}
where $a(I,J)$ is defined in (\ref{eq.1: between cubes}) and satisfies the estimate (\ref{eq.2: between cubes}).

We observe that for the contained cubes $\sigma_{\operatorname{in}}$, we have from Lemma \ref{lem:IveeJ} the bound $\ell(K)\Big(\frac{\ell(I)}{\ell(K)}\Big)^\theta\lesssim D(I,J).$ Also, from the definition of the contained cubes we have $D(I,J)\lesssim\ell(J)$, which is the same as the conclusion of Lemma \ref{lem:an estimate for D(I,J)} in the case of $\sigma_{\operatorname{between}}$. Thus, we have all the same auxiliary estimates as in $\sigma_{\operatorname{between}}$ and the same conclusion
\begin{equation*}
\begin{split}
  \sigma_{\operatorname{in}}&=c\sum_{j=1}^{\infty}\sum_{i=j}^{\infty}\sum_K(\Norm{K}{CZ_s}+\Norm{T}{L^2\to L^2})2^{-i(s-2\epsilon)}\pair{g}{A_K^{ij} f}\\
  &=c(\Norm{K}{CZ_s}+\Norm{T}{L^2\to L^2})\sum_{j=1}^{\infty}\sum_{i=j}^{\infty}2^{-i(s-2\epsilon)}\pair{g}{S^{ij} f},
\end{split}
\end{equation*}
where $A^{ij}_K$ is an averaging operator and  $S^{ij}$ is a good wavelet shift with parameters $(i,j)$.

\subsection{Near-by cubes, $\sigma_{=}$ and $\sigma_{\operatorname{near}}$}

We are left to deal with the sums $\sigma_{=}$ of equal cubes $I=J$, as well as $\sigma_{\operatorname{near}}$ of disjoint near-by cubes with $\dist(I,J)\leq\ell(J)(\ell(I)/\ell(J))^\theta$. Since $I$ is good, this necessarily implies that $\ell(I)>2^{-r}\ell(J)$. Then, for a given $J$, there are only boundedly many related $I$ in this sum. Note that in contrast to \cite[Section 3.3]{Hyt:dyadic op.} we compute both sums using good wavelet shifts of type $(i,i)$ and $(i,j)$.

\begin{lemma}\label{estimate for all cubes}
For all $I,J\in\mathscr{D}$, we have
\begin{equation*}
  \abs{\pair{\psi_J}{T\psi_I}}\leq\Norm{T}{L^2\to L^2}.
\end{equation*}
\end{lemma}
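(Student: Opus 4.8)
The plan is to recognize this as an immediate consequence of the Cauchy--Schwarz inequality combined with the $L^2$-boundedness of $T$ and the normalization built into the wavelet system. There is essentially nothing deeper going on: the lemma merely records that $T$ sends unit vectors of $L^2(\R^d)$ to vectors of norm at most $\Norm{T}{L^2\to L^2}$.

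First I would invoke property (i) of the definition of a wavelet system, namely that $\{\psi_I^\eta\}_{I\in\mathscr{D},\eta\neq0}$ is an orthonormal basis of $L^2(\R^d)$; in particular each basis function is a unit vector, so $\Norm{\psi_I}{L^2}=\Norm{\psi_J}{L^2}=1$. (One could instead read this off directly from the explicit dilation/translation formula $\psi_I^\eta(x)=2^{dk/2}\psi^\eta(2^kx-l)$ together with $\Norm{\psi^\eta}{L^2}=1$, which is how the normalization $2^{dk/2}$ was chosen.) Then I would simply write
\begin{equation*}
  \abs{\pair{\psi_J}{T\psi_I}}\leq\Norm{\psi_J}{L^2}\,\Norm{T\psi_I}{L^2}\leq\Norm{\psi_J}{L^2}\,\Norm{T}{L^2\to L^2}\,\Norm{\psi_I}{L^2}=\Norm{T}{L^2\to L^2},
\end{equation*}
which is the claim.

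The only point requiring a word of care is the standing abbreviation whereby ``$\psi_I$'' suppresses a finite sum over $\eta\in\{0,1\}^d\setminus\{0\}$; if one insists on the constant being exactly $\Norm{T}{L^2\to L^2}$ with no dimensional factor, one should read the statement for a single fixed $\eta$ (and a fixed $\eta'$ on the $\psi_J$ side), for which the displayed estimate is verbatim correct. Restoring the sums over $\eta,\eta'$ only costs a harmless constant depending on $d$, and in any case the later applications of this lemma (in $\sigma_=$ and $\sigma_{\operatorname{near}}$, where only boundedly many pairs $(I,J)$ occur for each $J$) use it merely as a uniform $O(1)$ bound, so no sharpness is lost. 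I do not expect any genuine obstacle here; the ``main step'' is just Cauchy--Schwarz.
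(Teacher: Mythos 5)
Your proof is correct and coincides with the paper's own argument: Cauchy--Schwarz, the operator norm bound for $T$, and the unit $L^2$-normalization of the wavelets. The added remark about the implicit summation over $\eta$ is a fair point of care but does not change the substance.
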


\begin{proof}
Using the $L^2$-boundedness of $T$, we estimate simply
\begin{equation*}
  \abs{\pair{\psi_J}{T\psi_I}}\leq\Norm{\psi_J}{2}\Norm{T}{L^2\to L^2}\Norm{\psi_I}{2}=\Norm{T}{L^2\to L^2}.
\end{equation*}
\end{proof}

Using this lemma and applying Lemma \ref{lem:IveeJ} for the good $I=J\in\mathscr{D}$ and a cube $J'\in\mathscr{D}$ adjacent to $I$ (i.e., $\ell(J')=\ell(I)$ and $\dist(I,J')=0$), we have that $K:=I\vee J'$ satisfies $\ell(K)\lesssim\ell(I)$ and $mI\subset K$. Moreover, from Lemma \ref{estimate for all cubes}, we have
\begin{equation*}
  \abs{\pair{\psi_I}{T\psi_I}}\leq\Norm{T}{L^2\to L^2}=\frac{\sqrt{|I||J|}}{|I|}\Norm{T}{L^2\to L^2}\lesssim\frac{\sqrt{|I||J|}}{|K|}\Norm{T}{L^2\to L^2}.
\end{equation*}
Thus, we can organize the sum $\sigma_{=}$ as follows
\begin{equation*}
\begin{split}
  \sigma_{\operatorname{=}}&=\sum_{i=1}^{c}\sum_K\sum_{\substack{I:mI\subset K\\\ell(I)=2^{-i}\ell(K)\\ }}\pair{g}{\psi_I}\pair{\psi_I}{T\psi_I}\pair{\psi_I}{f}\\
  &=c\sum_{i=1}^{c}\sum_K\Norm{T}{L^2\to L^2}\pair{g}{A_K^{ii}f}\\
  &=c\Norm{T}{L^2\to L^2}\sum_{i=1}^{c}\pair{g}{S^{ii}f},
\end{split}
\end{equation*}
where $A^{ii}_K$ is an averaging operator and  $S^{ii}$ is a good wavelet shift with parameters $(i,i)$.

For $I$ and $J$ participating in $\sigma_{\operatorname{near}}$, we conclude from Lemma \ref{lem:IveeJ} that $K:=I\vee J$ satisfies $\ell(K)\lesssim\ell(I)$. Also, from Lemma \ref{estimate for all cubes}, we have
\begin{equation*}
  \abs{\pair{\psi_J}{T\psi_I}}\leq\Norm{T}{L^2\to L^2}\leq\frac{\sqrt{|I||J|}}{|I|}\Norm{T}{L^2\to L^2}\lesssim\frac{\sqrt{|I||J|}}{|K|}\Norm{T}{L^2\to L^2}.
\end{equation*}
Hence, we may organize
\begin{equation*}
\begin{split}
  \sigma_{\operatorname{near}}&=\sum_{i=1}^{c}\sum_{j=1}^i \sum_K\sum_{\substack{I,J:\dist(I,J)\leq\ell(J)(\ell(I)/\ell(J))^\theta \\ I\cap J=\varnothing,I\vee J=K\\\ell(I)=2^{-i}\ell(K),\ell(J)=2^{-j}\ell(K)}}\pair{g}{\psi_J}\pair{\psi_J}{T\psi_I}\pair{\psi_I}{f}\\
  &=c\sum_{i=1}^{c}\sum_{j=1}^i \sum_K\Norm{T}{L^2\to L^2}\pair{g}{A_K^{ij}f}\\
  &=c\Norm{T}{L^2\to L^2}\sum_{i=1}^{c}\sum_{j=1}^i\pair{g}{S^{ij}f},
\end{split}
\end{equation*}
where $A^{ij}_K$ is an averaging operator and  $S^{ij}$ is a good wavelet shift with parameters $(i,j)$.

Summarizing, we have
\begin{equation*}
  \sigma_{=}+\sigma_{\operatorname{near}}=c\Norm{T}{L^2\to L^2}\sum_{j=1}^{c}\sum_{i=j}^{c} \pair{g}{S^{ij}f},
\end{equation*}
where $S^{ij}$ is a good wavelet shift of type $(i,j)$.

\subsection{Synthesis}

We have checked that
\begin{equation*}
\begin{split}
  &\sum_{\ell(I)\leq\ell(J)} 1_{\operatorname{good}}(I)\pair{g}{\psi_J}\pair{\psi_J}{T\psi_I}\pair{\psi_I}{f}\\
  &=c(\Norm{K}{CZ_s}+\Norm{T}{L^2\to L^2})\Big(\sum_{1\leq j\leq i<\infty}(2^{-i(s-\epsilon)}+2^{-i(s-2\epsilon)})\pair{g}{S^{ij}f}\Big),
\end{split}
\end{equation*}
where $S^{ij}$ is a good wavelet shift of type $(i,j)$.

By symmetry (just observing that the cubes of equal size contributed precisely to the presence of the shifts of type $(i,i)$, and that the dual of a shift of type $(i,j)$ is a shift of type $(j,i)$), it follows that
\begin{equation*}
\begin{split}
  &\sum_{\ell(I)>\ell(J)} 1_{\operatorname{good}}(J)\pair{g}{\psi_J}\pair{\psi_J}{T\psi_I}\pair{\psi_I}{f}\\
  &=c(\Norm{K}{CZ_s}+\Norm{T}{L^2\to L^2})\Big(\sum_{1\leq i<j<\infty}(2^{-j(s-\epsilon)}+2^{-j(s-2\epsilon)})\pair{g}{S^{ij}f}\Big)
\end{split}
\end{equation*}
so that altogether
\begin{equation}\label{final sum}
\begin{split}
  &\sum_{I,J} 1_{\operatorname{good}}(\min\{I,J\}) \pair{g}{\psi_J}\pair{\psi_J}{T\psi_I}\pair{\psi_I}{f}\\
  &=c(\Norm{K}{CZ_s}+\Norm{T}{L^2\to L^2})\Big(\sum_{i,j=1}^{\infty}(2^{-\max(i,j)(s-\epsilon)}+2^{-\max(i,j)(s-2\epsilon)})\pair{g}{S^{ij}f}\Big).
\end{split}
\end{equation}
The coefficient in (\ref{final sum}) is dominated by $2^{-\max(i,j)(s-\epsilon')}$, where $\epsilon'=2\epsilon$ for any given $\epsilon>0$. This completes the proof of Theorem \ref{thm:formula}.

\end{document}